\newtheorem{theorem}{Theorem}
\newtheorem{lemma}[theorem]{Lemma}
\newtheorem{definition}[theorem]{Definition}
\newcommand{\Z}{\mathbb{Z}}	
\newcommand{\Q}{\mathbb{Q}}	
\newcommand{\T}{\mathcal{T}}
\newcommand{\expect}{\mathbb{E}}
\newcommand{\prob}{\mathbb{P}}
\DeclareMathOperator{\Aut}{Aut}
\title{Topology and geometry of random $2$-dimensional hypertrees}
\author{Matthew Kahle}
\address[M.K.]{The Ohio State University, 100 Math Tower, 231 W 18th Ave, Columbus, OH 43210}
\email{mkahle@math.osu.edu}
\thanks{M.K.\ was supported in part by NSF-CCF grants \#1740761 and \#1839358. He is grateful to the Simons Foundation for a Simons Fellowship, and to the Berlin Mathematical School for a Mercator Fellowship.}
\author{Andrew Newman}
\address[A.N.]{Technische Universit\"at Berlin, Chair of Discrete Mathematics / Geometry, Strasse des 17. Juni 136, 10623 Berlin, Germany}
\email{newman@math.tu-berlin.de}
\thanks{A.N. was supported by Deutsche Forschungsgemeinschaft (DFG, German Research 
Foundation) Graduiertenkolleg 2434 "Facets of Complexity".}
\date{\today}
\begin{document}

\maketitle

\begin{abstract}
A hypertree, or $\mathbb{Q}$-acyclic complex, is a higher-dimensional analogue of a tree. We study random $2$-dimensional hypertrees according to the determinantal measure suggested by Lyons. We are especially interested in their topological and geometric properties. We show that with high probability, a random $2$-dimensional hypertree $T$ is apsherical, i.e.\ that it has a contractible universal cover. We also show that with high probability the fundamental group $\pi_1(T)$ is hyperbolic and has cohomological dimension $2$.
\end{abstract}

\section{Introduction}

The following enumerative formula is well known.

\begin{theorem} \label{thm:Cayley}
The number of spanning trees on $n$ vertices is $$n^{n-2}.$$
\end{theorem}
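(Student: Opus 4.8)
The plan is to give the standard bijective proof: construct an explicit bijection between the spanning trees on the vertex set $[n] = \{1,\dots,n\}$ and the set of sequences $[n]^{n-2}$ (the \emph{Pr\"ufer sequences}). Since the latter set has exactly $n^{n-2}$ elements, this proves Theorem~\ref{thm:Cayley}.

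First I would define the encoding from trees to sequences. Given a spanning tree $T$ on $[n]$, repeat the following $n-2$ times: locate the leaf of the current tree with the smallest label, record the label of its unique neighbor, and delete that leaf. After $n-2$ steps two vertices remain, joined by an edge, and the $n-2$ recorded labels, in order, form the Pr\"ufer sequence of $T$. The point that makes this well defined is that any tree on at least two vertices has at least two leaves, so the procedure never stalls and terminates with exactly two vertices.

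Next I would exhibit the inverse map. The key combinatorial fact is that each vertex $v$ occurs in the Pr\"ufer sequence of $T$ exactly $\deg_T(v)-1$ times; in particular the labels \emph{absent} from the sequence are precisely the leaves of $T$. This dictates how to decode an arbitrary sequence $(a_1,\dots,a_{n-2}) \in [n]^{n-2}$ into a tree: track which labels are currently ``available'' (a label is available once it occurs nowhere later in the sequence and has not yet been used), and for $i = 1,\dots,n-2$ add the edge joining the smallest available label to $a_i$; finally add the edge between the two labels that never got used. One then checks that decoding undoes leaf-stripping, so both maps are bijections and the number of spanning trees on $[n]$ equals $\lvert [n]^{n-2} \rvert = n^{n-2}$.

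The only subtle point — and the main thing to verify carefully — is that the two procedures are mutually inverse, which is a bookkeeping argument best run by induction on $n$: one shows that the first leaf stripped from $T$ is exactly the smallest label missing from its Pr\"ufer sequence, and that stripping that leaf produces a tree on the remaining $n-1$ labels whose Pr\"ufer sequence is $(a_2,\dots,a_{n-2})$. Once this inductive step is established the claim is immediate. (As an alternative, since this paper is built around determinantal measures coming from weighted matrix-tree theorems, one could instead invoke Kirchhoff's matrix-tree theorem and compute the relevant cofactor of the Laplacian of $K_n$, whose nonzero eigenvalue is $n$ with multiplicity $n-1$, giving $\tfrac1n \cdot n^{n-1} = n^{n-2}$; but the Pr\"ufer bijection is more elementary and self-contained.)
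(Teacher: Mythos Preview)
Your Pr\"ufer-sequence argument is correct and is one of the standard proofs of Cayley's formula; the inductive verification that encoding and decoding are mutual inverses is routine and your sketch captures the essential point (that a vertex appears in the sequence $\deg_T(v)-1$ times, so the leaves are exactly the missing labels). The alternative you mention via the matrix--tree theorem is also valid.

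However, there is nothing to compare against: the paper does not prove Theorem~\ref{thm:Cayley}. It simply states the formula as a classical result, attributes it to Borchardt and Cayley, and points the reader to the literature (Aigner--Ziegler's \emph{Proofs from THE BOOK} and the Avron--Dershowitz/Pitman argument) for proofs. The theorem serves only as historical and motivational background for Kalai's higher-dimensional analogue, which is the paper's actual starting point. So your proposal is a fine self-contained proof, but it is supplying something the authors deliberately omitted rather than reconstructing an argument from the paper.
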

The trees are understood to be labelled, i.e.\ on vertex set $[n]:=\{1, 2, \dots, n \}$, and not merely up to isomorphism type. The example $n=4$ is illustrated in Figure \ref{fig:Cayley}. There are only $2$ trees on $4$ vertices up to isomorphism, but there are $16$ labelled trees.

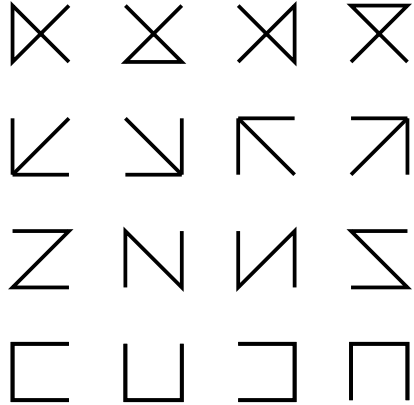
\begin{figure}[t]
\centering
\begin{tikzpicture}[line width=0.5mm,scale=0.75]
\draw (1,0)--++(-1,0)--++(0,1)--++(1,0);
\draw (2,1)--++(0,-1)--++(1,0)--++(0,1);
\draw (4,0)--++(1,0)--++(0,1)--++(-1,0);
\draw (6,0)--++(0,1)--++(1,0)--++(0,-1);

\draw (0,3)--++(1,0)--++(-1,-1)--++(1,0);
\draw (2,2)--++(0,1)--++(1,-1)--++(0,1);
\draw (4,3)--++(0,-1)--++(1,1)--++(0,-1);
\draw (6,2)--++(1,0)--++(-1,1)--++(1,0);

\draw(0,4)--++(1,0);
\draw(0,4)--++(0,1);
\draw(0,4)--++(1,1);

\draw(3,4)--++(-1,0);
\draw(3,4)--++(0,1);
\draw(3,4)--++(-1,1);

\draw (4,5)--++(1,0);
\draw (4,5)--++(1,-1);
\draw (4,5)--++(0,-1);

\draw (7,5)--++(0,-1);
\draw (7,5)--++(-1,-1);
\draw (7,5)--++(-1,0);

\draw (1,0)--++(-1,0)--++(0,1)--++(1,0);
\draw (6,0)--++(0,1)--++(1,0)--++(0,-1);
\draw (2,1)--++(0,-1)--++(1,0)--++(0,1);
\draw (4,0)--++(1,0)--++(0,1)--++(-1,0);

\draw (1,6)--(0,7)--(0,6)--(1,7);
\draw (2,7)--(3,6)--(2,6)--(3,7);
\draw (4,7)--(5,6)--(5,7)--(4,6);
\draw (6,6)--(7,7)--(6,7)--(7,6);

\end{tikzpicture}
\caption{The $4^2=16$ trees on $4$ vertices.}
\label{fig:Cayley}
\end{figure}

Apparently, Theorem \ref{thm:Cayley} was first proved by Borchardt in 1860 \cite{Borchardt1860}. Cayley extended the statement in 1889 \cite{Cayley1889}, and it is often known as ``Cayley's formula.'' Several proofs can be found in Aigner and Ziegler's book \cite{AZ18}. Aigner and Ziegler write that the ``most beautiful proof of all'' was given by Avron and Dershowitz \cite{AD16}, based on ideas of Pitman.

The definition of a tree is that it is connected and has no cycles. Equivalently, a graph $G$ is a tree if it has no nontrivial homology, i.e.\ if $\tilde{H}_0(G) = H_1(G) = 0$. Kalai suggested the topological notion of $\Q$-acyclic simplicial complexes as higher-dimensional analogues of trees in \cite{Kalai1983}. $\Q$-acyclic complexes are sometimes called hypertrees. Here, we use the term \emph{$2$-tree} for a $2$-dimensional hypertree. The precise definition is as follows.

\begin{definition}
We say that a finite $2$-dimensional simplicial complex $S$ is a $2$-tree if it has all of the following properties.
\begin{itemize}
    \item $S$ has complete $1$-skeleton, i.e.\ if the underlying graph is a complete graph. 
    \item $H_1(S;\Q) = H_2 (S; \Q) = 0$.
\end{itemize}
\end{definition}

Kalai proved a general formula for a weighted enumeration of $\Q$-acyclic complexes, which specializes to the following in the case of $2$-trees.

\begin{theorem}[Kalai \cite{Kalai1983}]
$$\sum_{S \in \T(n)} |H_{1}(S)|^2 = n^{\binom{n-2}{2}}$$
\end{theorem}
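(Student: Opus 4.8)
The plan is to prove the formula via the simplicial matrix-tree theorem, i.e.\ via a Cauchy--Binet expansion of a determinant. Fix the vertex $n$, and let $\widetilde{\partial}$ be the ``reduced'' top boundary matrix of the complete $2$-dimensional complex on $[n]$: its columns are indexed by the $\binom{n}{3}$ triangles, its rows by the $\binom{n-1}{2}$ edges \emph{not} containing $n$, and its $(\sigma,\tau)$-entry is the incidence number $[\sigma:\tau]\in\{0,\pm 1\}$. The key structural lemma is that ``forgetting the edges through $n$'' gives an isomorphism of abelian groups $Z_1(K_n;\Z)\cong\Z^{\binom{n-1}{2}}$ from the lattice of integral $1$-cycles; injectivity is a one-line argument (a cycle supported on edges through $n$ must vanish, by looking at a single boundary), and surjectivity follows by lifting back one dimension at a time using $\partial^2=0$. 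Granting this, $\widetilde{\partial}$ is precisely $\partial_2$ followed by that isomorphism, so for a set $S$ of triangles the square submatrix $\widetilde{\partial}_S$ has $\det\widetilde{\partial}_S\neq 0$ iff $\{\partial_2\tau:\tau\in S\}$ is a $\Q$-basis of $Z_1\otimes\Q$; since a $2$-tree has complete $1$-skeleton and, by Euler characteristic, exactly $\binom{n-1}{2}$ triangles, this happens precisely when $S\in\T(n)$, and then a Smith normal form computation gives $|\,Z_1(K_n;\Z)/\partial_2(\Z^{S})\,|=|H_1(S;\Z)|$ (using that $S$ has the full $1$-skeleton, so $Z_1(S;\Z)=Z_1(K_n;\Z)$). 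Cauchy--Binet then yields
\[
\sum_{S\in\T(n)}|H_1(S)|^2 \;=\; \sum_{|S|=\binom{n-1}{2}}(\det\widetilde{\partial}_S)^2 \;=\; \det\big(\widetilde{\partial}\,\widetilde{\partial}^{\,T}\big).
\]

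It then remains to evaluate this $\binom{n-1}{2}\times\binom{n-1}{2}$ determinant. Here I would invoke the standard spectral fact for the complete complex that $\partial_2\partial_2^{T}\partial_2=n\,\partial_2$ (equivalently, the up-down Laplacian on $1$-chains of $K_n$ has all eigenvalues in $\{0,n\}$): since $\ker(\partial_2\partial_2^{T})=(\operatorname{im}\partial_2)^{\perp}=Z_1^{\perp}$ and $H_1(K_n;\mathbb{R})=0$, this forces $\partial_2\partial_2^{T}=n\,\Pi$ with $\Pi$ the orthogonal projection of $C_1(K_n;\mathbb{R})$ onto $Z_1$. Writing $\rho$ for the restriction map, $\widetilde{\partial}\,\widetilde{\partial}^{\,T}=\rho\,\partial_2\partial_2^{T}\rho^{T}=n\,\rho\Pi\rho^{T}$, so $\det(\widetilde{\partial}\,\widetilde{\partial}^{\,T})=n^{\binom{n-1}{2}}\det(\rho\Pi\rho^{T})$. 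Factoring $\Pi=UU^{T}$ with the columns of $U$ an orthonormal basis of $Z_1\otimes\mathbb{R}$ shows $\det(\rho\Pi\rho^{T})=\det(\rho U)^2$, and $\rho U$ is the matrix of the isomorphism $\rho|_{Z_1}$ from the structural lemma; as that isomorphism carries $Z_1(\Z)$ onto $\Z^{\binom{n-1}{2}}$, which has covolume $1$, its determinant is $\pm 1/\operatorname{covol}(Z_1(K_n;\Z))$. Hence $\det(\widetilde{\partial}\,\widetilde{\partial}^{\,T})=n^{\binom{n-1}{2}}/\operatorname{covol}(Z_1(K_n;\Z))^2$.

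Finally, the covolume of the integral cycle lattice of a graph is pinned down by the classical matrix-tree theorem. From the split exact sequence $0\to Z_1(K_n;\Z)\to\Z^{\binom{n}{2}}\to\Z^{n-1}\to 0$, where the surjection is the incidence matrix $\widetilde{D}$ of $K_n$ with the row of vertex $n$ deleted (onto because the $n-1$ edges through $n$ already map to a basis), one gets $\operatorname{covol}(Z_1(K_n;\Z))^2=\det(\widetilde{D}\,\widetilde{D}^{T})$; and $\widetilde{D}\,\widetilde{D}^{T}=nI_{n-1}-J_{n-1}$ is the reduced Laplacian of $K_n$, of determinant $n^{n-2}$ — the matrix-tree theorem together with Cayley's formula (Theorem~\ref{thm:Cayley}). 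Combining everything,
\[
\sum_{S\in\T(n)}|H_1(S)|^2 \;=\; \frac{n^{\binom{n-1}{2}}}{n^{n-2}} \;=\; n^{\binom{n-1}{2}-(n-2)} \;=\; n^{\binom{n-2}{2}},
\]
using $\binom{n-1}{2}=\binom{n-2}{2}+(n-2)$.

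I expect the main obstacle to be twofold. First, the integral bookkeeping in the Cauchy--Binet step: one must work over $\Z$ throughout so that $|H_1(S;\Z)|$ itself, and not merely a rational covolume, appears — this is exactly where the isomorphism $Z_1(K_n;\Z)\cong\Z^{\binom{n-1}{2}}$ does the work, and it is the step most prone to an off-by-a-lattice-index error. Second, the spectral identity $\partial_2\partial_2^{T}\partial_2=n\,\partial_2$ for the complete complex: although standard, it needs either an $S_n$-representation argument or a direct (slightly fiddly) sign computation checking that the contributions of triangles outside a fixed triangle cancel. Everything else is routine linear algebra and the classical graph matrix-tree theorem.
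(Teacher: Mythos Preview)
The paper does not prove this statement at all; it is quoted as a theorem of Kalai with a citation to \cite{Kalai1983} and used only as background for defining the determinantal measure. There is therefore no proof in the paper to compare your attempt against.

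For what it is worth, your argument is correct and is essentially Kalai's original proof via Cauchy--Binet. Two remarks. First, the spectral identity you flag as potentially ``fiddly'' is in fact immediate once you use the full Laplacian: on $C_1$ of the complete $2$-complex one has $\partial_2\partial_2^{T}+\partial_1^{T}\partial_1=nI$, and composing with $\partial_2$ and using $\partial_1\partial_2=0$ gives $\partial_2\partial_2^{T}\partial_2=n\,\partial_2$ directly, with no sign bookkeeping needed. Second, your justification of surjectivity of $\rho|_{Z_1}$ (``lifting back one dimension at a time using $\partial^2=0$'') is a bit cryptic; the clean statement is that the edges through $n$ form a spanning tree of $K_n$, so any integral $1$-chain on the remaining edges can be completed to a cycle by adding a unique tree chain. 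Otherwise the covolume manipulations and the final reduction to the graph matrix-tree theorem are all sound.
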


Here the notation $|G|$ denotes the order of the group $G$. Since $H_{1}(S; \Q) = 0$ by definition, by the universal coefficient theorem we have that $H_{1}(S)$ is a finite group for every $S \in \T(n)$.

The smallest topologically nontrivial example of a $2$-tree is the $6$-vertex projective plane, illustrated in Figure \ref{fig:RP2}. A topological space is said to be \emph{aspherical} if it has a contractible universal cover. The $6$-vertex projective plane is a good example to show that $2$-trees are not always aspherical.

\begin{figure}

\begin{tikzpicture}[scale=1.25]
\draw (90:1)--(210:1)--(330:1)--cycle;
\draw (30:2)--(90:2)--(150:2)--(210:2)--(270:2)--(330:2)--cycle;
\draw (330:1)--(30:2)--(90:1);
\draw (90:1)--(150:2)--(210:1);
\draw (210:1)--(270:2)--(330:1);
\draw (90:1)--(90:2);
\draw (210:1)--(210:2);
\draw (330:1)--(330:2);
\node at (90:0.7) {\bf 1};
\node at (330:0.7) {\bf 2};
\node at (210:0.7) {\bf 3};
\node at (90:2.2) {\bf 4};
\node at (30:2.2) {\bf 5};
\node at (-30:2.2) {\bf 6};
\node at (-90:2.2) {\bf 4};
\node at (-150:2.2) {\bf 5};
\node at (-210:2.2) {\bf 6};
\end{tikzpicture}
\caption{The smallest topologically nontrivial $2$-tree is the $6$-vertex projective plane.}
\label{fig:RP2}
\end{figure}
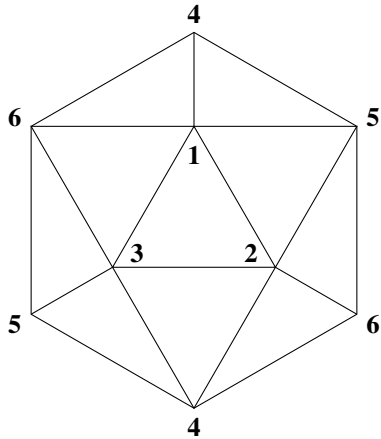

More general enumerative formulas were given by Duval, Klivans, and Martin \cite{DKM09}. These generalizations again are weighted enumeration formulas. In fact, currently for $d \geq 2$ establishing the unweighted enumeration for $d$-trees is an open problem. The best-known upper and lower bounds on unweighted enumeration are given by Linial and Peled \cite{LP19}.

Kalai's enumeration suggests a natural probability distribution on $2$-trees, first studied by Lyons \cite{Lyons09}. Let $\T (n)$ denote the set of all $2$-trees on vertex set $[n]$. Define a probability measure on $\T(n)$ by making the probability of every $2$-tree $T$ proportional to $|H_1(T)|^2$. Equivalently, by Kalai's formula, the probability of any particular $2$-tree $T$ is given by
$$\mathbb{P}(T) = \frac{ |H_1(T)|^2}{n^{\binom{n-2}{2}}}.$$
This is the distribution we study for the rest of this paper. This distribution is in many ways nicer than the uniform distribution. The most important property of this probability distribution for our applications is that it satisfies negative association. This is an a result of Lyons \cite{Lyons09} that we review in Section \ref{sec:neg}. 

We write $T \sim \T(n)$ to denote a $2$-tree chosen according to the determinantal measure described above. For any property $P_n$, we say that property $P_n$ occurs \emph{with high probability (w.h.p.)} if $\mathbb{P}[ T \in P_n] \to 1$ as $n \to \infty$. We are mostly interested in topological and geometric properties of $T$. Our main results are that w.h.p., $T$ is aspherical and that $\pi_1(T)$ is a hyperbolic group of cohomological dimension $2$. The proofs depend on combining ideas from probability, topology, and geometric group theory.

We note that many other models of random simplicial complex have been studied --- see, for example, the survey in Chapter 22 of \cite{Handbook2018}. The closest model to what we study here is the Linial--Meshulam model $Y \sim Y(n,p)$ introduced in \cite{LM06}, which is the ``face-independent'' model, a higher-dimensional analogue of the Erd\H{o}s--R\'enyi edge-independent random graph $G(n,p)$. Negative association allows us to relate random $2$-trees with $Y(n,p)$. Babson, Hoffman, and Kahle showed the fundamental group $\pi_1(Y)$ (in a certain range of parameter) is hyperbolic, with high probability, and Costa and Farber showed that $Y$ is ``almost'' aspherical, and they also showed that $\pi_1(Y)$ has cohomological dimension $2$. 

The remainder of the paper is organized as follows. In Section \ref{sec:neg}, we review definitions of ``determinantal measures'' and negative association. In Section \ref{sec:hyper}, we show that w.h.p.\ the fundamental group of the random $2$-tree $\pi_1(T)$ is a hyperbolic group w.h.p.  In Section \ref{sec:nontrivial} we show that w.h.p.\ $H_1(T) \neq 0$ and in Section \ref{sec:coho} we show that w.h.p. $\pi_1(T)$ is cohomologically $2$-dimensional. In Section \ref{sec:questions}, we suggest a few questions for future study.

\section{Negative association} \label{sec:neg}

We first review the definitions of determinantal measure and negative association. In particular, we briefly overview the work of Lyons \cite{Lyons03,  Lyons09} which is essential for our results. In \cite{Lyons03}, Lyons defines a \emph{determinantal probability measure} as follows.
\begin{definition}
Given a finite set $E$, a probability measure $\mu$ on $E$ is said to be a determinantal probability measure if there exists a matrix $M$ so that for all $S \subseteq E$, the probability that a subset $T$ sampled by $\mu$ contains $S$ as a subset is given by $\det(M_{S, S})$, i.e.\ the determinant of the submatrix of $M$ whose rows and columns are indexed by $S$.
\end{definition}

A \emph{monotone increasing event} is an event $\mathcal{A}$ so that $S \in \mathcal{A}$ and $S \subseteq T$ implies that $T \in \mathcal{A}$. A key fact about determinantal measure is that they satisfy \emph{negative association}, defined in \cite{Lyons09} as follows.

\begin{definition}
Given a finite set $E$, a probability measure $\mu$ on $E$ is said to satisfy negative association provided that for every pair of monotone increasing events $\mathcal{A}$ and $\mathcal{B}$
\[\mu(\mathcal{A} \cap \mathcal{B}) \leq \mu(\mathcal{A})\mu(\mathcal{B}).\]
\end{definition}

Lyons shows in Theorem 6.5 of \cite{Lyons03} that determinantal probability measures satisfy negative association, and in \cite{Lyons09} that the torsion-squared distribution on $2$-trees we consider here is a determinantal measure. 

For our purposes here, we will primarily be interested in the situation where we wish to bound the probability that a determinantal-measure sampled $2$-tree contains some particular, finite subcomplex. In our case, given $n$, the set $E$ in the definition of a determinantal probability measure is the set of all $\binom{n}{3}$ triangles on $n$ vertices. By Euler characteristic any $2$-tree contains exactly $\binom{n - 1}{2}$ triangles, so we have by symmetry that under the torsion-squared distribution the probability that a random $2$-tree contains any particular face is \[\frac{\binom{n-1}{2}}  {\binom{n}{3}} = \frac{3}{n}.\]  Thus, for a fixed (labeled) subcomplex $K$ given by triangles $\sigma_1, ..., \sigma_k$, we have by negative association that the probability that $T$ sampled from the torsion-squared distribution contains $K$ as a subcomplex is at most $(3/n)^k$. For this reason we say that the faces of a torsion-squared random $2$-tree are negatively correlated. 

In contrast to the determinantal measure, the uniform measure on $2$-trees need not have negatively correlated faces. This can be seen by a exhaustive enumeration of $2$-trees on 6 vertices. This is discussed in \cite{KLNP18}, and we review the discussion as follows. There are 46620 $2$-trees on vertex set $\{1, ..., 6\}$. As $2$-trees on 6 vertices contain $\binom{5}{2} = 10$ triangles out of a total of $\binom{6}{3} = 20$ possible triangles, by symmetry we have that the probability that a uniform random $2$-tree contains any given triangle is $10/20=1/2$. On the other hand, $11664$ $2$-trees contain both the triangle $[1, 2, 3]$ and the triangle $[4, 5, 6]$ by exhaustive enumeration. However $11664/46620 > 1/4.$ Changing to the torsion-squared distribution resolves this in the case $n = 6$ because 12 of the $2$-trees on 6 vertices are labeled triangulations of the projective plane. None of these contain both $[1, 2, 3]$ and $[4, 5, 6]$. Sampling by torsion-squared counts these 12 complexes each 4 times and gives that the probability a $2$-tree contains both $[1, 2, 3]$ and $[4, 5, 6]$ is $11664/46656 = 1/4$.

\section{Hyperbolicity} \label{sec:hyper}

We show in this section that  w.h.p.\ $\pi_1(T)$ is hyperbolic in the sense of Gromov \cite{Gromov87}. The proof is based on the main result in \cite{BHK11}---indeed, we will use a key lemma from the paper as our main tool. We first review a few key definitions and notions related to hyperbolicity.

Let $C_r$ denote a cycle of length $r$. For a simplicial complex $X$, a \emph{loop} is a simplicial map $\gamma: C_r \to X$. In this case, we define the \emph{length of $\gamma$} by $L(\gamma)=r$.

We say that $(C_r \xrightarrow{b} D \xrightarrow{\pi} X)$ is a \emph{filling} of $\gamma$ if $D$ is a simplicial complex, $b$ and $\pi$ are simplicial maps such that $\gamma = \pi b$, and the mapping cylinder of $b$ is homeomorphic to a $2$-dimensional disk.

\begin{figure}
    \centering
    \begin{tikzpicture}[scale=1.25]
    \draw [line width=0.75mm] (0,0)--(3,0);
    \draw [line width=0.75mm,fill=lightgray](3,0)--(3.866,0.5)--(3.866,-0.5)--cycle;
    \draw [line width=0.75mm,fill=lightgray](0,0)--(-0.866,0.5)--(-0.866,-0.5)--cycle;
    \draw (-1.732,2)--(0,1)--(3,1)--(4.732,2)--(4.732,-2)--(3,-1)--(0,-1)--(-1.732,-2)--cycle;
    \draw (-1.732,2)--(-0.866,0.5);
    \draw (0,1)--(0,0);
    \draw (1,1)--(1,0);
    \draw (2,1)--(2,0);
    \draw (3,1)--(3,0);
    \draw (4.732,2)--(3.866,0.5);
    \draw (4.732,-2)--(3.866,-0.5);
    \draw (3,-1)--(3,0);
    \draw (2,-1)--(2,0);
    \draw (1,-1)--(1,0);
    \draw (0,-1)--(0,0);
    \draw (-1.732,-2)--(-0.866,-0.5);
    \end{tikzpicture}
    \caption{The mapping cylinder of $b$ for a filling $(C_r \xrightarrow{b} D \xrightarrow{\pi} X)$ of a cycle $\gamma$. Here $L(\gamma)=12$ and $A(\gamma)=2$.} 
    \label{fig:my_label}
\end{figure}
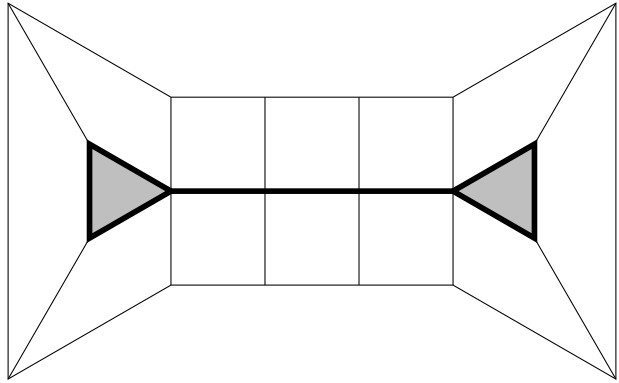
Let $f_2(D)$ denote the number of $2$-dimensional faces in $D$. We define the area of the filling to be the number of faces in $f_2(D)$. For a null-homotopic loop $\gamma$, we say that the \emph{area of $\gamma$}, denoted $A(\gamma)$, is the minimal area over all fillings.

Now, we are ready for a definition of hyperbolic group.

\begin{definition}
Let $\Delta$ be a finite simplicial complex. We say that the fundamental group $\pi_1 ( \Delta)$ is hyperbolic if there exists a constant $K > 0$ such that 
$$A( \gamma) \le K L (\gamma)$$
for every null-homotopic loop $\gamma$.
\end{definition}

It is not obvious from this definition, but this is an invariant property of the group $\pi_1( \Delta)$ which does not depend on the choice of simplicial complex $\Delta$. This definition in terms of a linear isoperimetric inequality is similar to the first definition given by Gromov in 
\cite{Gromov87}. Satisfying such an inequality is equivalent to a Cayley graph of the group being $\delta$-hyperbolic, or the group being word hyperbolic.

Our main tool in this section is the following, which appears in \cite{BHK11}. 

\begin{theorem}[Babson--Hoffman--Kahle,  Theorem 1.9 in \cite{BHK11}]\label{thm:linear}  Let $\epsilon > 0$, and suppose that $\Delta$ is a finite simplicial complex such that for every subcomplex $S \subseteq \Delta$, we have that
$$\frac{f_2(S)}{f_0(S)} \le 2 - \epsilon.$$
Then $\Delta$ satisfies a linear isoperimetric inequality. Namely
$$A(\gamma) \le \lambda \, L(\gamma)$$
for every null-homotopic loop $\gamma$. Here $\lambda = \lambda( \epsilon)$ is a constant which only depends on $\epsilon$.
\end{theorem}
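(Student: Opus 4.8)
The plan is to reduce the statement to a combinatorial estimate on a minimal-area filling of a given null-homotopic loop, settle the case in which that filling embeds in $\Delta$ by an Euler-characteristic count, and then handle the general case by surgery on the filling together with induction on area.

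Fix a null-homotopic loop $\gamma$ with $L(\gamma)=L$ and choose a filling $(C_r \xrightarrow{b} D \xrightarrow{\pi} \Delta)$ with $f_2(D)=A(\gamma)$; since the mapping cylinder of $b$ is a $2$-disk we may treat $D$ as a triangulated disk whose boundary circuit $\partial D$ has at most $L$ edges. First I would arrange that the filling is \emph{reduced}: if two $2$-faces of $D$ meeting along an edge had the same image under $\pi$, then excising that pair and re-gluing the disk lowers $f_2(D)$, contradicting minimality, so no such ``fold'' occurs. Next, the Euler relation $f_0(D)-f_1(D)+f_2(D)=1$, the incidence identity $3f_2(D)=2f_1^{\mathrm{int}}(D)+f_1^{\partial}(D)$, and $f_1^{\partial}(D)=|\partial D|\le L$ combine to give $f_2(D)=2f_0(D)-|\partial D|-2\le 2f_0(D)$. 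So it suffices to bound $f_0(D)$ linearly in $L$.

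If $\pi$ is injective on $D$ this is immediate: then $D$ is isomorphic to a subcomplex $S\subseteq\Delta$, the hypothesis gives $f_2(D)=f_2(S)\le(2-\epsilon)f_0(S)=(2-\epsilon)f_0(D)$, and comparing with $f_2(D)=2f_0(D)-|\partial D|-2$ yields $\epsilon f_0(D)\le|\partial D|+2\le L+2$. Hence $A(\gamma)=f_2(D)\le 2f_0(D)\le 2(L+2)/\epsilon\le 6L/\epsilon$ for $L\ge1$, so $\lambda=6/\epsilon$ works whenever the minimal filling embeds.

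The remaining case — a reduced filling on which $\pi$ is still not injective — is where the real work lies, and I expect it to be the main obstacle. Here distant vertices, edges, or faces of $D$ can share an image. The idea is to \emph{resolve} such a coincidence: pick an innermost pair of simplices of $D$ with a common image, join the relevant boundary points of the innermost region by a short arc of $D$, and cut $D$ along that arc (equivalently, identify along it and re-fill), producing one or two strictly smaller fillings to which induction on area applies. The delicate point is that the innermost choice must force the bounded-off subdisk to be embedded, so that the previous paragraph controls its size — and hence the length of the cutting arc — in terms of the \emph{local} boundary length; getting this bookkeeping right so that neither the number of pieces nor the total perimeter inflates is what lets the induction close with $\lambda=\lambda(\epsilon)$ alone. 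An alternative that avoids the surgery is a combinatorial Gauss--Bonnet argument: assign combinatorial curvature to the faces and interior vertices of a minimal diagram, use the sparsity hypothesis on subcomplexes to show every interior contribution is nonpositive (indeed bounded away from $0$), and conclude from the total curvature $2\pi$ of the disk that $f_2(D)=O(L/\epsilon)$.
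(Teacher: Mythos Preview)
The paper does not prove this theorem at all: it is quoted verbatim from \cite{BHK11} and used as a black box in the proof of Theorem~\ref{thm:hyperbolic}. So there is no ``paper's own proof'' to compare against here.

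That said, your attempt deserves comment. The embedded case is clean and correct: the Euler identity $f_2(D)=2f_0(D)-|\partial D|-2$ for a triangulated disk, combined with $f_2\le(2-\epsilon)f_0$ from the hypothesis, gives $A(\gamma)\le 6L/\epsilon$ exactly as you wrote. The problem is that everything substantive has been pushed into the non-embedded case, which you only outline. Two specific concerns. First, for the surgery route, the sentence ``the innermost choice must force the bounded-off subdisk to be embedded'' is the entire proof, and it is not obviously true: innermost with respect to a single identified pair does not preclude other identifications inside the subdisk, so you would need a more refined well-ordering (e.g.\ on the number of identified pairs, or on area plus perimeter) to make the induction close without the constant depending on $n$ or on $A(\gamma)$. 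Second, for the Gauss--Bonnet alternative, the curvature at an interior vertex of $D$ is governed by the combinatorics of $D$, not of $\Delta$; the sparsity hypothesis $f_2/f_0\le 2-\epsilon$ is about subcomplexes of $\Delta$, and you have not said how it constrains vertex degrees in the (non-embedded) diagram $D$. Without that bridge the curvature argument does not get off the ground. In short, the proposal correctly identifies where the difficulty lies but does not resolve it; what you have written is an accurate roadmap rather than a proof.
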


We also require the following, which allows us to pass from local to global isoperimetric inequalities. This particular statement for simplicial complexes and its proof also appear in \cite{BHK11}, and it is based on earlier work of Gromov \cite{Gromov87} and Papasoglu \cite{Papasoglu96}.

\begin{theorem} \label{thm:local-to-global}
Suppose that $\rho \ge 1$ and $X$ is a finite simplicial complex for which every null-homotopic loop $\gamma: C_r \to X$ with $A(\gamma) \le 44^3 \rho^2$ satisfies $A(\gamma) \le \rho  L(\gamma)$. Then every null-homotopic loop $\gamma: C_r \to X$ satisfies $A(\gamma) \le 44 \rho L(\gamma)$.
\end{theorem}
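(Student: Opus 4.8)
The plan is to reduce the statement to a decomposition argument on minimal fillings: cut a minimal disk filling of $\gamma$ into many pieces of bounded area by repeatedly cutting along short arcs, apply the hypothesis to each small piece, and add up. This is the combinatorial form of Gromov's ``Cartan--Hadamard'' principle for isoperimetric inequalities, made quantitative by Papasoglu, so the proof would follow that template and the exposition in \cite{BHK11}; equivalently one can phrase the bookkeeping as a strong induction on $A(\gamma)$. If $A(\gamma) \le 44^3\rho^2$ there is nothing to prove, since the hypothesis already gives $A(\gamma) \le \rho\,L(\gamma) \le 44\rho\,L(\gamma)$ using $\rho \ge 1$. So the work is in the case $A(\gamma) > 44^3\rho^2$.

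Here I would fix a minimal filling $(C_r \xrightarrow{b} D \xrightarrow{\pi} X)$, so that $f_2(D) = A(\gamma)$ and $\partial D$ has length $n := L(\gamma)$, and use the basic move of cutting $D$ along a simple arc $\alpha$ of its $1$-skeleton with both endpoints on $\partial D$, producing two subdisks $D_1, D_2$ with boundary loops $\gamma_i := \partial D_i$. Three elementary facts make this useful: cutting along a $1$-dimensional arc loses no faces, so $f_2(D_1) + f_2(D_2) = f_2(D)$, with both terms strictly smaller for a non-degenerate cut (one exists since $f_2(D) \ge 2$); the arc is traversed by both new boundaries, so $L(\gamma_1) + L(\gamma_2) = n + 2L(\alpha)$; and a subdisk of a minimal diagram is again minimal (a routine surgery), so $f_2(D_i) = A(\gamma_i)$. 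Iterating the cut (using the bisection lemma discussed below) decomposes $D$ into subdisks $E_1, \dots, E_m$, each of area at most $44^3\rho^2$; applying the hypothesis to each $\partial E_j$ and summing gives
\[ A(\gamma) = f_2(D) = \sum_j f_2(E_j) = \sum_j A(\partial E_j) \le \rho \sum_j L(\partial E_j) = \rho\, n + 2\rho\, \Lambda, \]
where $\Lambda$ is the total length of all cutting arcs used.

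Everything therefore hinges on arranging $\Lambda$ to be small --- precisely, $2\rho\Lambda \le \bigl(1 - \tfrac1{44}\bigr)A(\gamma)$, so that the displayed inequality rearranges to $A(\gamma) \le 44\rho\,n$ --- and this is the step I expect to be the real obstacle: it is the delicate combinatorial core of \cite{Papasoglu96}, after \cite{Gromov87}. One needs that a disk diagram of area $A$ admits a simple separating arc of length comparable to $\sqrt A$ cutting off a region whose area is a definite fraction of $A$, with a good enough implied constant that one such cut on a region of area exceeding $44^3\rho^2$ is genuinely efficient (the arc is short relative to the area it severs). Then the arc lengths accumulated while cutting all the way down to scale $\sim \rho^2$ form a geometric series summing to a small multiple of $A/(44^{3/2}\rho)$, which is what forces $2\rho\Lambda$ below the required fraction of $A(\gamma)$; the role of the large threshold $44^3\rho^2$ is exactly to make this accumulated length a small fraction of the total area, and the constant $44$ then absorbs it. In a write-up I would invoke this bisection lemma, or transcribe the argument from \cite{BHK11}, rather than re-derive it; once it is in hand, the remaining points --- existence of non-degenerate cuts, the surgery showing subdiagrams of minimal diagrams are minimal, and the arithmetic of the constants --- are routine.
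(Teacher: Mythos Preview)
The paper does not actually prove this theorem: it quotes the statement verbatim from \cite{BHK11} and attributes the underlying idea to Gromov \cite{Gromov87} and Papasoglu \cite{Papasoglu96}, so there is no in-paper argument to compare against. Your sketch follows precisely the Gromov--Papasoglu route that those references take --- minimal disk diagram, iterated short-arc bisection down to area $\le 44^3\rho^2$, apply the hypothesis on each piece, and control the total cut length $\Lambda$ so that $2\rho\Lambda$ is absorbed by the slack between $\rho$ and $44\rho$ --- and you correctly identify the bisection lemma (a separating arc of length $O(\sqrt{A})$ cutting off a definite fraction of the area) as the one nontrivial ingredient. Your bookkeeping $A(\gamma) \le \rho n + 2\rho\Lambda$ together with $2\rho\Lambda \le (1-\tfrac{1}{44})A(\gamma)$ rearranging to $A(\gamma) \le 44\rho n$ is exactly right, as is the observation that subdiagrams of minimal diagrams are minimal. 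In short: your proposal is correct in outline and is the same argument the paper defers to by citation; a full write-up would indeed need the quantitative bisection lemma with constants compatible with $44$, which you would have to import from \cite{Papasoglu96} or \cite{BHK11} as you say.
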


In other words, if $X$ satisfies a linear isoperimetric inequality locally, then it satisfies one globally, although perhaps with a worse isoperimetric constant. So it suffices to check hyperbolicity on balls of finite radius. We are now ready to prove the main result of the section.

\begin{theorem} \label{thm:hyperbolic} Suppose $T \sim \T (n)$ is a random $2$-tree according to the determinantal measure. Then w.h.p. $\pi_1 (T)$ is a hyperbolic group. 
\end{theorem}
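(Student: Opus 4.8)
The plan is to verify the hypothesis of Theorem~\ref{thm:linear} for $T$, using negative association to control the density $f_2(S)/f_0(S)$ over all subcomplexes $S \subseteq T$. By Theorem~\ref{thm:linear}, it suffices to show that w.h.p.\ there is a fixed $\epsilon > 0$ such that every subcomplex $S \subseteq T$ satisfies $f_2(S)/f_0(S) \le 2 - \epsilon$. Equivalently, writing $j = f_0(S)$ for the number of vertices spanned by $S$, we want that w.h.p.\ no set of $j$ vertices spans more than $(2-\epsilon)j$ faces of $T$, simultaneously for all $3 \le j \le n$. I expect to fix a small constant, say $\epsilon = 1/10$, so that the target is at most $(19/10)j$ faces on any $j$ vertices.

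First I would set up a first-moment (union bound) computation. For a fixed set $W$ of $j$ vertices and a fixed collection of $m$ triangles among the $\binom{j}{3}$ triangles on $W$, negative association (as reviewed in Section~\ref{sec:neg}) gives that the probability all $m$ of these triangles lie in $T$ is at most $(3/n)^m$. Summing over the choices of $W$ and of the $m$-subset of triangles, the probability that some $j$-vertex subcomplex contains at least $m$ faces is at most
\[
\binom{n}{j}\binom{\binom{j}{3}}{m}\left(\frac{3}{n}\right)^{m}.
\]
I would then take $m = \lceil (2-\epsilon)j \rceil$ and show this sum, over all $j$ from some constant $j_0$ up to $n$, tends to $0$. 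The small values of $j$ (bounded $j$) can be handled separately: for $j \le j_0$ a fixed constant, a $j$-vertex subcomplex with more than $2j$ triangles would already be a finite "forbidden" configuration, and since $2$-trees themselves satisfy $f_2/f_0 = \binom{n-1}{2}/n < n/2$, any genuinely dense small subcomplex of bounded size occurs with probability $O(n^{-c})$ for some $c>0$; alternatively one notes that a $2$-tree is $\mathbb{Q}$-acyclic, so any subcomplex with $f_2 > f_1 - f_0 + 1$ would carry rational $2$-homology, already forcing $f_2(S) \le \binom{j}{2} - j + 1 < j^2/2$, but this bound is too weak for large $j$, which is exactly why we need the probabilistic estimate in the main range.

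The estimate itself is the routine part: using $\binom{\binom{j}{3}}{m} \le (e\binom{j}{3}/m)^m \le (ej^2)^m$ (crudely) and $\binom{n}{j} \le n^j$, the summand is at most $n^j (ej^2)^m (3/n)^m = n^{j - m}(3ej^2)^m$. With $m \ge (2-\epsilon)j$ we get $n^{j-m} \le n^{-(1-\epsilon)j}$, and we need this to beat $(3ej^2)^m \le (3ej^2)^{2j}$, i.e.\ we need $n^{(1-\epsilon)j}$ to dominate $(3ej^2)^{2j}$. This works as long as $j = O(\sqrt{n}/\mathrm{polylog})$ or so; for larger $j$ the crude bound $\binom{\binom{j}{3}}{m} \le (e\binom{j}{3}/m)^m$ must be used more carefully, keeping the factor $1/m \approx 1/(2j)$ and $\binom{j}{3}\approx j^3/6$, giving per-triangle weight $\approx (j^3/6)/(2j) \cdot (3/n) = j^2/(4n)$, so the summand is $\le n^j (j^2/(4n))^{(2-\epsilon)j} = \big(n \cdot (j^2/(4n))^{2-\epsilon}\big)^{j}$; since $j \le n$ this base is $\le n \cdot (n/4)^{2-\epsilon} \cdot$ (correction) — the key point is that the exponent on $n$ inside is $1 - (2-\epsilon) < 0$, so the base behaves like $n^{-(1-\epsilon)} j^{2(2-\epsilon)}$, which is $< 1$ once $j$ is below a constant times $n^{(1-\epsilon)/(2(2-\epsilon))}$, and a complementary argument (or a slightly larger $\epsilon$-budget allowing $m$ closer to $2j$) covers the remaining range up to $j = n$. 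Finally, once the density bound holds for all $S$, Theorem~\ref{thm:linear} yields a linear isoperimetric inequality for $T$ with constant depending only on $\epsilon$, hence $\pi_1(T)$ is hyperbolic; Theorem~\ref{thm:local-to-global} is not even strictly needed here since Theorem~\ref{thm:linear} already gives the global inequality, but it would be the tool if one wanted to check the density condition only on balls of bounded radius.

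The main obstacle I anticipate is making the union bound work all the way up to $j$ close to $n$: near $j = n$ the entropy term $\binom{n}{j}$ is small but $\binom{\binom{j}{3}}{m}$ is enormous, and the naive bound is lossy. The fix is to be careful about which $m$ to use — one does not need $m \le (2-\epsilon)j$ to be tight for every $j$; it is enough that \emph{some} subcomplex exceeding the threshold is unlikely, and for $j$ near $n$ one can exploit that $T$ has exactly $\binom{n-1}{2}$ faces in total, so a subcomplex on $j = n - t$ vertices (for small $t$) omits at most $O(tn)$ of the faces incident to the $t$ deleted vertices, automatically bounding $f_2(S)$. Interpolating between this "co-small" regime and the first-moment regime for intermediate $j$ is the delicate bookkeeping, but it is bookkeeping rather than a conceptual gap.
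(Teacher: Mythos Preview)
There is a genuine gap: the hypothesis of Theorem~\ref{thm:linear} simply cannot hold for $\Delta = T$. You yourself note that $f_2(T)/f_0(T) = \binom{n-1}{2}/n \approx n/2$, so taking $S = T$ already violates $f_2(S)/f_0(S) \le 2-\epsilon$, deterministically. More generally, a typical set of $j$ vertices spans on the order of $\binom{j}{3}\cdot(3/n)\approx j^3/(2n)$ faces of $T$, giving density $\approx j^2/(2n)$, which exceeds $2$ as soon as $j$ is a bit larger than $\sqrt{n}$. So no amount of ``bookkeeping'' will rescue the union bound in the range $\sqrt{n}\ll j\le n$; the event you are trying to prove likely is in fact impossible. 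Theorem~\ref{thm:linear} is never applicable to $T$ itself.

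What the paper does instead is exactly the idea you set aside: it applies Theorem~\ref{thm:linear} only to subcomplexes on at most $C'$ vertices, where $C'$ is a fixed constant (so the union bound is a finite sum of $o(1)$ terms, with $\epsilon=1/2$). This yields a linear isoperimetric inequality $A(\gamma)\le\lambda L(\gamma)$ only for loops whose minimal filling lives inside a subcomplex of bounded size, which one checks is the case whenever $A(\gamma)$ is bounded by a suitable constant $C$ (via an Euler-characteristic count on the filling disk). Then Theorem~\ref{thm:local-to-global} upgrades this local linear inequality to a global one. So Theorem~\ref{thm:local-to-global} is not optional here; it is the device that lets you avoid the impossible task of bounding $f_2/f_0$ on large subcomplexes.
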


\begin{proof}[Proof of Theorem \ref{thm:hyperbolic}]

With foresight into the calculations to come, let $\epsilon = 1/2$, and let $\lambda = \lambda( \epsilon)$ be the constant guaranteed by Theorem \ref{thm:linear}. So for every finite simplicial complex $\Delta$ satisfying the condition of Theorem \ref{thm:linear}, and every null-homotopic loop $\gamma: C_r \to \Delta$, we have 
$$A( \gamma) \le \lambda L(\lambda).$$

Now, let $C$ be chosen such that $$C \ge \max\{ 44^3 \lambda^2, 44^3 \},$$
and then let $C'$ be chosen such that
$$C' \ge  \frac{C}{2} \left( 1 + \frac{1}{\lambda} \right) + 1.$$
We emphasize that $C$ and $C'$ are chosen to be sufficiently large, but are still fixed as $n \to \infty$.

First, we check that w.h.p.\ for every subcomplex $\Delta \subset T$ on at most $C'$ vertices, we have 
$$\frac{f_2(\Delta)}{f_0(\Delta)} < \frac{3}{2}.$$
Note first that if there exists a subcomplex $\Delta \subset T$ with $f_2(\Delta) \ge (3/2) f_0(\Delta),$
then there exists a subcomplex $\Delta'\subset T$ with $f_2(\Delta') = \lceil (3/2) f_0(\Delta')\rceil$. Indeed, $\Delta'$ can be obtained by deleting one face from $\Delta$ at a time until equality is achieved.

A union bound, together with negative association, gives that
$$ \prob \left[ \exists \, \Delta \subset T \mbox{ with } f_2(\Delta) > (3/2) f_0(\Delta) \right] \le \sum_{k=1}^{C'} \binom{n}{k} \binom{\binom{k}{3}}{\lceil (3/2)k \rceil} \left(\frac{3}{n}\right)^{\lceil (3/2)k \rceil}.
$$
The sum tends to zero as $n$ tends to infinity, since $C'$ is fixed so there are only a bounded number of summands, and every summand tends to zero.

By Theorem \ref{thm:linear}, we have that w.h.p.\ every subcomplex $\Delta \subset T$ on at most $C'$ vertices satisfies the linear isoperimetric inequality
$$A(\gamma) \le \lambda L(\gamma).$$

Next, we check that this implies that
$$A(\gamma) \le \lambda \, L(\gamma)$$ 
for every null-homotopic loop $\gamma$ in $T$ with $A(\gamma) \le C$.

Suppose that $\gamma$ is a null-homotopic loop with $A(\gamma) \le C$. 
If $L( \gamma ) >  C / \lambda$, then since $A( \gamma ) \le C$ it is immediate that $A( \gamma ) \le \lambda L( \gamma )$.

So suppose instead that $L( \gamma ) \le  C / \lambda$. In this case, $A(\gamma)$ and $L(\gamma)$ are both bounded. It follows that if $(C_r \xrightarrow{b} D \xrightarrow{\pi} T)$ is a filling of $\gamma$, then the number of vertices $f_0(D)$ is bounded as well. Indeed, let $v$, $e$, and $f$ denote the number of vertices, edges, and faces in the mapping cylinder of $b$. Since we have a bijection between vertices of the mapping cylinder, and the disjoint union of vertices in $C_r$ and vertices in $D$, we have $$v = L(\gamma) + f_0(D).$$ 
By double counting edge-face incident pairs have $2e = 5L + 3A$. or $$e = (5/2) L + (3/2) A.$$
Finally, we have
$$f = L + A,$$
since every face of the mapping cylinder is either a square face (corresponding to a single edge of $C_r$) or a triangle face of the simplicial complex $D$.
Since the mapping cylinder is a topological disk, we have $$v-e+f=1.$$ Putting it all together gives that $ f_0(D)  = A(\gamma)/2 +  L(\gamma)/2 +1$.

In the case we are interested in, we have
\begin{align*}
    f_0(D) & = A(\gamma)/2 +  L(\gamma)/2 +1 \\
    & \le \frac{C}{2} + \frac{C}{2 \lambda} + 1\\
    & = \frac{C}{2} \left( 1 + \frac{1}{\lambda} \right) + 1\\
    & \le C',
\end{align*}
by choice of $C'$. Then the image of the map $\pi:  D \to T$ lies in subcomplex $\Delta \subset T$ on at most $C'$ vertices, so by the above $A(\gamma) \le \lambda L(\gamma)$, as desired. 

Let $\rho = \max \{ 1, \lambda \}$. Then $\rho \ge 1$ and we have that  $A(\gamma) \le \rho L(\gamma)$  for every null-homotopic loop $\gamma$ with $A(\gamma) \le C$. Theorem \ref{thm:local-to-global} gives that 
$$A( \gamma) \le 44 \rho L( \gamma)$$
for all null-homotopic $\gamma$ in $T$. Setting $K = 44 \rho$, we have the desired result.

\end{proof}

\section{Nontriviality and expected order of torsion} \label{sec:nontrivial}

In this section, we give upper bounds on the probability that homology $H_1(T)$ is trivial and lower bounds on its expected order. We make use of the following observation of Kalai \cite{Kalai1983} on the number of $2$-trees on $n$ vertices. 

\begin{lemma} \label{lem:numb}
Let $N(n)$ denote the number of $2$-trees on $n$ vertices. Then 
$$N(n) \le (en/3)^{\binom{n-1}{2}}.$$
\end{lemma}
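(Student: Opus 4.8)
The plan is to bound $N(n)$ using Kalai's own torsion-weighted enumeration formula, namely $\sum_{S \in \T(n)} |H_1(S)|^2 = n^{\binom{n-2}{2}}$. Since each term $|H_1(S)|^2 \ge 1$, this immediately gives the crude bound $N(n) \le n^{\binom{n-2}{2}}$. However, that bound has exponent $\binom{n-2}{2}$ rather than $\binom{n-1}{2}$, and base $n$ rather than $en/3$, so it is in the wrong form (it is actually \emph{stronger} in the exponent). So the real content of the lemma is to produce a clean bound with exponent $\binom{n-1}{2}$ — the number of faces in a $2$-tree — which is the natural ``entropy'' exponent for a union-bound argument. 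I would get this directly by a counting argument: a $2$-tree has exactly $\binom{n-1}{2}$ triangles, so the number of $2$-trees is at most the number of ways to choose $\binom{n-1}{2}$ triangles out of the $\binom{n}{3}$ possible triangles, i.e. $N(n) \le \binom{\binom{n}{3}}{\binom{n-1}{2}}$.

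The remaining step is then a routine estimate: bound $\binom{\binom{n}{3}}{\binom{n-1}{2}}$ using $\binom{a}{b} \le (ea/b)^b$. With $a = \binom{n}{3} \sim n^3/6$ and $b = \binom{n-1}{2} \sim n^2/2$, the ratio $a/b$ is asymptotically $n/3$, so one gets $N(n) \le \left(e \binom{n}{3} / \binom{n-1}{2}\right)^{\binom{n-1}{2}}$, and then it suffices to check that $\binom{n}{3} / \binom{n-1}{2} \le n/3$ exactly (not just asymptotically). Indeed $\binom{n}{3} / \binom{n-1}{2} = \frac{n(n-1)(n-2)/6}{(n-1)(n-2)/2} = n/3$, so this is an identity, not an inequality, and the bound $N(n) \le (en/3)^{\binom{n-1}{2}}$ follows cleanly.

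The main obstacle — such as it is — is just deciding which route gives exactly the stated constant $e/3$; the combinatorial-choice route does this on the nose because of the exact identity $\binom{n}{3}/\binom{n-1}{2} = n/3$, whereas trying to massage Kalai's weighted formula would require separately arguing that torsion does not help much, which is more work and gives a different-looking bound. So I would present the one-line argument: every $2$-tree is determined by its set of $\binom{n-1}{2}$ triangles, there are at most $\binom{\binom{n}{3}}{\binom{n-1}{2}}$ such sets, and the standard binomial estimate together with the identity above yields the claim. A brief remark could note that this is all that is needed for the later union bounds, where one pairs $(en/3)^{\binom{n-1}{2}}$ against a probability of order $(3/n)^{\text{(number of faces)}}$ coming from negative association.
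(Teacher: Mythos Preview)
Your proposal is correct and follows essentially the same approach as the paper: count $2$-trees by the number of ways to choose their $\binom{n-1}{2}$ triangles from the $\binom{n}{3}$ available, apply the estimate $\binom{a}{b} \le (ea/b)^b$, and use the exact identity $\binom{n}{3}/\binom{n-1}{2} = n/3$. The paper's proof is precisely this one-line argument, without the preliminary digression about Kalai's weighted formula.
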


\begin{proof}
Every $2$-tree $T$ on $n$ vertices has $\binom{n}{2}$ edges. The Betti numbers are $\beta_0 = 1$ and $\beta_1 = \beta_2 = 0$, by definition. 
By the Euler formula, $T$ has $\binom{n-1}{2}$ $2$-dimensional faces.
So the total number of $2$-trees is at most 
$$    \binom{ \binom{n}{3}}{\binom{n - 1}{2}}  \le \left( \frac{e \binom{n}{3}}{\binom{n - 1}{2}} \right)^{\binom{n-1}{2}} 
     = \left( \frac{en}{3} \right)^{\binom{n-1}{2}}.
$$
\end{proof}

\begin{theorem}\label{nontrivialH1}
Let $T \in \T(n)$. With probability at least $1 - \exp(-\Omega(n^2))$, we have $H_1(T) \neq 0$.
\end{theorem}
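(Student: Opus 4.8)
The plan is to compare the torsion-squared measure on $\mathcal{T}(n)$ with a uniform measure via Kalai's enumeration, and to show that the $2$-trees with trivial $H_1$ are too few to carry much mass. Concretely, Kalai's theorem says $\sum_{S\in\mathcal{T}(n)}|H_1(S)|^2 = n^{\binom{n-2}{2}}$, so $\prob[H_1(T)=0] = M(n)/n^{\binom{n-2}{2}}$, where $M(n)$ is the number of $2$-trees with $H_1(S)=0$ (each such $S$ contributes exactly $1$ to the weighted sum). Thus it suffices to prove $M(n) \le \exp(-\Omega(n^2))\, n^{\binom{n-2}{2}}$. First I would bound $M(n)$ crudely: since $M(n) \le N(n)$, Lemma~\ref{lem:numb} gives $M(n) \le (en/3)^{\binom{n-1}{2}}$. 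Then I would compare exponents: $\binom{n-1}{2}\log(en/3)$ versus $\binom{n-2}{2}\log n$. Both are $\Theta(n^2\log n)$, and their difference is
\[
\binom{n-2}{2}\log n - \binom{n-1}{2}\log(en/3) = \binom{n-2}{2}\log n - \binom{n-1}{2}\bigl(\log n + 1 - \log 3\bigr).
\]
Since $\binom{n-1}{2} - \binom{n-2}{2} = n-2$, the $\log n$ terms combine to $-(n-2)\log n$, which is negative and only of order $n\log n$, while the $-\binom{n-1}{2}(1-\log 3)$ term is \emph{positive} of order $n^2$ because $1-\log 3 < 0$. Hence the difference is $+\Omega(n^2)$, giving $M(n)/n^{\binom{n-2}{2}} \le \exp(-\Omega(n^2))$ and therefore $\prob[H_1(T)\neq 0] \ge 1-\exp(-\Omega(n^2))$, as claimed.

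The one subtlety I would be careful about is that Lemma~\ref{lem:numb} counts \emph{all} $2$-trees, not just the $\mathbb{Q}$-acyclic ones with trivial torsion, so using $M(n)\le N(n)$ is wasteful but still sufficient; the bound $(en/3)^{\binom{n-1}{2}}$ already loses only a $\log$-factor's worth of exponent against the target $n^{\binom{n-2}{2}}$, and the gain we need is a genuine $n^2$ term, which is exactly what the constant $1-\log 3$ in the base supplies. I would double-check the direction of this inequality numerically for small $n$ to make sure the asymptotic estimate is not masking a sign error, but the computation above shows the dominant $\Omega(n^2)$ term has the right sign.

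The main obstacle — really the only place any thought is needed — is making sure the exponent comparison is honest: one must verify that the favorable $n^2$-order term genuinely dominates the unfavorable $n\log n$-order term from the mismatch between $\binom{n-1}{2}$ and $\binom{n-2}{2}$, and that no hidden polynomial or subexponential factors (from the union bound over labelings, or from the $\binom{\binom{n}{3}}{\binom{n-1}{2}}$ estimate) erode the $\exp(-\Omega(n^2))$ conclusion. Since all such corrections are at most $\exp(O(n\log n))$, they are absorbed into the $\Omega(n^2)$ savings, so the argument goes through.
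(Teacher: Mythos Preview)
Your proposal is correct and follows essentially the same route as the paper: bound the number of $2$-trees with trivial $H_1$ by $N(n)\le (en/3)^{\binom{n-1}{2}}$ via Lemma~\ref{lem:numb}, divide by the Kalai denominator $n^{\binom{n-2}{2}}$, and observe that the ratio is $\exp(-\Omega(n^2))$ because $e/3<1$. The only cosmetic difference is that the paper simplifies the ratio directly to $(e/3)^{\binom{n-1}{2}}n^{\,n-2}$, whereas you carry out the equivalent exponent comparison via logarithms.
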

\begin{proof}
The probability that $X$ sampled from $\mathcal{C}_n$ with respect to the determinantal measure has $H_1(X) = 0$ is 
\begin{eqnarray*}
\sum_{\{X | H_1(X) = 0\}} \frac{1}{n^{\binom{n - 2}{2}}} &=& \frac{\text{Number of $2$-trees with $H_1(X) = 0$}}{n^{\binom{n - 2}{2}}}\\
&\leq& \frac{N(n)}{n^{\binom{n - 2}{2}}}
\end{eqnarray*}
By Lemma \ref{lem:numb}, we have that the above is at most
\begin{eqnarray*}
\left(\frac{en}{3}\right)^{\binom{n - 1}{2}} \frac{1}{n^{\binom{n - 2}{2}}} =  \left(\frac{e}{3}\right)^{\binom{n - 1}{2}} n^{n - 2}.
\end{eqnarray*}

It follows that the probability that $H_1(T)=0$ is $\exp(-\Omega(n^2))$.
\end{proof}

Next, we prove the following. 

\begin{theorem}\label{thm:expect} We have that $$\expect \left[ |H_1(T) | \right] \ge \left( \frac{3}{e} \right)^{\binom{n-2}{2}} \left( \frac{3}{en} \right)^{n-2}.$$
So in particular, we have that
$$\expect [ |H_1(T)| ] = \exp \left( \Theta \left( n^2 \right) \right).$$
\end{theorem}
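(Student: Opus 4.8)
The plan is to get a lower bound on $\expect[|H_1(T)|]$ directly from Kalai's enumeration formula, viewing the expectation as a ratio of two weighted sums over $\T(n)$. Recall that Kalai's theorem gives $\sum_{S \in \T(n)} |H_1(S)|^2 = n^{\binom{n-2}{2}}$, and that under the determinantal measure $\prob(T) = |H_1(T)|^2 / n^{\binom{n-2}{2}}$. Therefore
\[
\expect[|H_1(T)|] = \sum_{S \in \T(n)} |H_1(S)| \cdot \frac{|H_1(S)|^2}{n^{\binom{n-2}{2}}} = \frac{\sum_{S \in \T(n)} |H_1(S)|^3}{n^{\binom{n-2}{2}}}.
\]
So the task reduces to bounding $\sum_{S} |H_1(S)|^3$ from below. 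The natural tool is a power-mean or Hölder-type inequality relating $\sum |H_1(S)|^3$ to $\sum |H_1(S)|^2 = n^{\binom{n-2}{2}}$ and to $N(n) = |\T(n)|$, the plain count of $2$-trees.

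The key step is the following inequality: by the power mean inequality applied to the sequence $\bigl(|H_1(S)|\bigr)_{S \in \T(n)}$, we have
\[
\frac{\sum_S |H_1(S)|^3}{N(n)} \ge \left( \frac{\sum_S |H_1(S)|^2}{N(n)} \right)^{3/2},
\]
since the map $x \mapsto x^{3/2}$ is convex. Rearranging gives
\[
\sum_S |H_1(S)|^3 \ge \frac{\left( \sum_S |H_1(S)|^2 \right)^{3/2}}{N(n)^{1/2}} = \frac{n^{\frac32 \binom{n-2}{2}}}{N(n)^{1/2}}.
\]
Now plug in the upper bound $N(n) \le (en/3)^{\binom{n-1}{2}}$ from Lemma~\ref{lem:numb}, so $N(n)^{1/2} \le (en/3)^{\frac12 \binom{n-1}{2}}$, and divide by $n^{\binom{n-2}{2}}$:
\[
\expect[|H_1(T)|] \ge \frac{n^{\frac32 \binom{n-2}{2}}}{(en/3)^{\frac12 \binom{n-1}{2}} \, n^{\binom{n-2}{2}}} = \frac{n^{\frac12 \binom{n-2}{2}}}{(en/3)^{\frac12 \binom{n-1}{2}}}.
\]
The remaining step is purely computational: using $\binom{n-1}{2} = \binom{n-2}{2} + (n-2)$, one splits the denominator as $(3/e)^{-\frac12 \binom{n-2}{2}} \cdot (3/(en))^{-\frac12 (n-2)} \cdot n^{\frac12 \binom{n-2}{2}}$ — wait, more carefully, writing $(en/3)^{\frac12 \binom{n-1}{2}} = (e/3)^{\frac12 \binom{n-1}{2}} n^{\frac12 \binom{n-1}{2}}$ and combining the $n$-powers gives $\expect[|H_1(T)|] \ge (3/e)^{\frac12 \binom{n-1}{2}} n^{-\frac12(n-2)}$, and one checks this matches (in fact exceeds, up to the displayed form) $(3/e)^{\binom{n-2}{2}} (3/(en))^{n-2}$ after reconciling the exponents; the bookkeeping with $\binom{n-1}{2}$ versus $\binom{n-2}{2}$ is routine. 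The ``in particular'' statement $\expect[|H_1(T)|] = \exp(\Theta(n^2))$ then follows by combining this $\exp(\Omega(n^2))$ lower bound with the trivial upper bound $\expect[|H_1(T)|] \le \expect[|H_1(T)|^2] \cdot 1 \le \sum_S |H_1(S)|^2 / n^{\binom{n-2}{2}} \cdot \max_S|H_1(S)|$... actually more simply $|H_1(S)| \le |H_1(S)|^2$ would not give a bound, so instead use that $|H_1(S)|^3 \le |H_1(S)|^2 \cdot \max_S |H_1(S)|$ and crudely bound $\max_S|H_1(S)|$ by $\exp(O(n^2))$ via the Smith normal form / number of columns being $\binom{n-1}{2}$ with bounded entries.

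The main obstacle I anticipate is not the power-mean step — that is clean — but rather making sure the arithmetic with the binomial coefficients produces exactly the stated bound rather than merely a bound of the same order. One must be careful that the convexity inequality goes in the useful direction (it does: Jensen for the convex function $x^{3/2}$ gives a \emph{lower} bound on the average of $x^{3/2}$, which is what we want), and that the use of Lemma~\ref{lem:numb} for $N(n)$ — which appears in the \emph{denominator} after taking a square root — is also in the favorable direction (it is, since a larger $N(n)$ only weakens the lower bound, so the \emph{upper} bound on $N(n)$ from the lemma is exactly what we need). Once these directions are confirmed, the rest is algebra with $\binom{n-1}{2} = \binom{n-2}{2} + (n-2)$.
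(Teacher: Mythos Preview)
Your approach is essentially identical to the paper's: express $\expect[|H_1(T)|]$ as $n^{-\binom{n-2}{2}}\sum_S |H_1(S)|^3$, bound the cube sum below via Jensen for $x\mapsto x^{3/2}$ (this is the paper's Lemma~\ref{lem:ineq}) in terms of $\sum_S|H_1(S)|^2=n^{\binom{n-2}{2}}$ and $N(n)$, then insert Lemma~\ref{lem:numb}. One arithmetic slip to flag: the bound this computation actually yields is $(3/e)^{\frac12\binom{n-2}{2}}(3/(en))^{\frac12(n-2)}$, and since that product exceeds $1$ for large $n$ its square root is \emph{smaller} than the displayed expression without the $\tfrac12$'s --- so your parenthetical ``in fact exceeds'' is backwards; the paper's own proof lands on exactly the same square-root bound (so there is a mismatch with the theorem statement there too), and in any case the $\exp(\Theta(n^2))$ conclusion is unaffected.
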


We will use the following inequality.

\begin{lemma} \label{lem:ineq}
Let $x_1, x_2, \dots x_k \ge 0$ be non-negative real numbers.

Then it follows that
$$ \sum_{i=1}^k x_i^3 \ge  \frac{1}{\sqrt{k}} \left( \sum_{i=1}^k x_i^2 \right)^{3/2}.$$
\end{lemma}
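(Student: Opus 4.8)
The plan is to recognize Lemma \ref{lem:ineq} as nothing more than the power-mean inequality $M_2 \le M_3$ applied to the nonnegative reals $x_1, \dots, x_k$, and to extract the stated form by clearing denominators. First I would dispose of the degenerate case: if $x_1 = \dots = x_k = 0$ then both sides are $0$ and there is nothing to prove, so I may assume $\sum_{i=1}^k x_i^2 > 0$.

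For the main case I would realize the power-mean bound concretely via Hölder's inequality with exponents $3/2$ and $3$, applied to the vectors $(x_i^2)_{i=1}^k$ and $(1)_{i=1}^k$:
\[
\sum_{i=1}^k x_i^2 \;=\; \sum_{i=1}^k x_i^2 \cdot 1 \;\le\; \left( \sum_{i=1}^k (x_i^2)^{3/2} \right)^{2/3} \left( \sum_{i=1}^k 1^3 \right)^{1/3} \;=\; k^{1/3} \left( \sum_{i=1}^k x_i^3 \right)^{2/3}.
\]
Raising both sides to the power $3/2$ (legitimate since both sides are nonnegative) gives $\left(\sum_i x_i^2\right)^{3/2} \le \sqrt{k}\,\sum_i x_i^3$, which is exactly the claimed inequality after dividing by $\sqrt{k}$.

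If one prefers a proof using only Cauchy--Schwarz, the same conclusion follows from two applications of it: with $a_i = x_i^{1/2}$ and $b_i = x_i^{3/2}$ one gets $\left(\sum_i x_i^2\right)^2 \le \left(\sum_i x_i\right)\left(\sum_i x_i^3\right)$, and then $\sum_i x_i \le \sqrt{k}\,\bigl(\sum_i x_i^2\bigr)^{1/2}$; combining these and dividing through by $\bigl(\sum_i x_i^2\bigr)^{1/2} > 0$ yields $\bigl(\sum_i x_i^2\bigr)^{3/2} \le \sqrt{k}\,\sum_i x_i^3$. There is no genuine obstacle in this lemma — it is a one-line consequence of a classical inequality; the only point requiring a moment's attention is excluding the all-zero case before dividing. (Note that equality would hold precisely when all the nonzero $x_i$ coincide, but this is irrelevant for the application in the proof of Theorem \ref{thm:expect}, where the lemma is used only as a one-sided bound.)
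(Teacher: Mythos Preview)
Your proof is correct and takes essentially the same approach as the paper. The paper invokes Jensen's inequality with the convex function $\phi(x)=x^{3/2}$, equal weights $a_i=1$, and arguments $y_i=x_i^2$, which is precisely the power-mean inequality $M_2\le M_3$ that you cite; your H\"older and Cauchy--Schwarz formulations are just alternative names for the same one-line bound.
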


\begin{proof}[Proof of Lemma \ref{lem:ineq}]
Jensen's inequality tells us that for a convex function $\phi$, numbers in its domain $y_1, y_2, \dots, y_k$, and positive weights $a_1, a_2, \dots, a_k$, we have 
$$\phi \left( \frac{\sum_{i=1}^k a_i y_i }{\sum_{i=1}^k a_i} \right) \le \frac{\sum_{i=1}^k a_i \phi(y_i) }{\sum_{i=1}^k a_i}.$$

Set $a_i = 1$ and $y_i = x_i^2$ for $i=1, 2, \dots k $, and let $\phi(x) = x^{3/2}$. We note that $\phi(x)$ is convex on the domain $\{ x \mid x \ge 0\}$.
\end{proof}

Given the lemma, we  prove Theorem \ref{thm:expect}.

\begin{proof}[Proof of Theorem \ref{thm:expect}]
By definition, we have that
\begin{align*}
    \expect \left[ |H_1(T)| \right] 
    &= \sum_{T \in \T(n)} \prob[T] |H_1(T)| \\
    & =  \sum_{T \in \T(n)} \frac{|H_1(T)|^2}{ n^{ \binom{n-2}{2}} }|H_1(T)| \\
    & = \frac{1}{ n^{ \binom{n-2}{2}} } \sum_{T \in \T(n)} |H_1(T)|^3 \\
    & \ge \frac{n^{3/2} \binom{n-2}{2}}{n^{\binom{n-2}{2}} \left( en/3 \right)^{(1/2)\binom{n-1}{2} }}. \\
\end{align*}
This last step is by applying Lemmas \ref{lem:numb} and \ref{lem:ineq}.
Simplifying, we have that
\begin{align*} \expect \left[ |H_1(T)| \right] & \ge \left( \sqrt{\frac{3}{e}} \right) ^{\binom{n-2}{2}} \left(\sqrt{ \frac{3}{en}} \right)^{n-2}\\
& = \left( (3/e)^{1/4}-o(1) \right)^{n^2}.\\
\end{align*}

\end{proof}

This is on the scale of the largest torsion possible, in the sense that for \emph{every} simplicial complex $\Delta$ on $n$ vertices, we have that the order of the torsion part of homology is bounded by 
$$    |H_1(\Delta)_{\mbox{torsion}}| \le \sqrt{3}^{\binom{n-1}{2}}\le \left(  3^{1/4} - o(1)  \right)^{n^2}, $$
This upper bound on torsion appears in many places, including \cite{Soule99} and \cite{HKP17}, and perhaps first appeared in Kalai's weighted enumeration of hypertrees \cite{Kalai1983}.

\section{$T$ is aspherical and $\pi_1(T)$ has cohomological dimension $2$} \label{sec:coho}

The main result of this section is the following.

\begin{theorem}\label{thm:aspherical}
Let $T \sim \T(n)$. Then, w.h.p.\ $T$ is aspherical.
\end{theorem}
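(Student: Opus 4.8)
The plan is to reduce asphericity of $T$ to the vanishing of $\pi_2(T)$, then to a statement about subcomplexes, and then to dispatch small subcomplexes by a union bound (as in Theorem \ref{thm:hyperbolic}) and large ones using hyperbolicity. The topological reduction is routine: for a $2$-dimensional complex $T$, being aspherical is equivalent to $\pi_2(T)=0$, since the universal cover $\widetilde T$ is simply connected and $2$-dimensional, so by the Hurewicz theorem $\pi_2(T)\cong H_2(\widetilde T)$, and if this vanishes then $\widetilde T$ is simply connected and acyclic, hence contractible. So it suffices to show that $\pi_2(T)=0$ with high probability.

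Suppose for contradiction that $\pi_2(T)\neq 0$ and fix a nontrivial element, represented after simplicial approximation by a \emph{reduced} spherical diagram $g\colon D\to T$, where $D$ is a triangulated $2$-sphere, ``reduced'' means $D$ has no two triangles sharing an edge and mapping to the same triangle of $T$, and $f_2(D)$ is minimal among all such. Two structural observations would drive the argument. First, the image $S:=g(D)\subseteq T$ is \emph{not} aspherical, since otherwise $g$ would be null-homotopic already in $S$, hence in $T$. Second, every edge of $S$ lies in at least two triangles of $S$, since otherwise the two triangles of $D$ meeting along a preimage edge would map to the same triangle of $T$, contradicting reducedness. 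Because $S$ is a subcomplex of the $2$-dimensional complex $T$ and $H_2(T;\Q)=0$, the inclusion forces $H_2(S;\Q)=0$; hence $S$ is connected with $\chi(S)=1-b_1(S)\le 1$, i.e.\ $f_1(S)\ge f_0(S)+f_2(S)-1$. Combining this with $3f_2(S)\ge 2f_1(S)$ (each triangle has three edges, each edge lies in at least two triangles) gives $f_2(S)\ge 2f_0(S)-2$. The $6$-vertex $\mathbb{RP}^2$ shows this is sharp.

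So it suffices to prove that w.h.p.\ $T$ contains no non-aspherical subcomplex $S$ in which every edge lies in at least two triangles and $f_2(S)\ge 2f_0(S)-2$. For $S$ on at most a fixed number $B$ of vertices, this follows by a union bound with negative association exactly as in the proof of Theorem \ref{thm:hyperbolic}: the probability is at most $\sum_{k\le B}\binom nk\binom{\binom k3}{2k-2}\left(\tfrac3n\right)^{2k-2}$, a finite sum whose terms are each of order $n^{2-k}\to 0$. This already shows that $T$ is ``locally aspherical'' w.h.p., in the spirit of Costa and Farber's ``almost aspherical'' result for $Y(n,p)$.

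The main obstacle is the large-$k$ regime, where this union bound diverges because there are superexponentially many candidate subcomplexes. Here I would use Theorem \ref{thm:hyperbolic}: on the high-probability event that $\pi_1(T)$ is hyperbolic, $\widetilde T$ satisfies a linear isoperimetric inequality $A(\gamma)\le 44\rho\,L(\gamma)$. Imitating the local-to-global mechanism behind Theorem \ref{thm:local-to-global}, one should be able to \emph{localize} a minimal-area reduced spherical diagram: if $D$ has large area, cut it along a short null-homotopic loop near a metric sphere in $\widetilde T$, refill that loop with a disk whose area is linearly bounded in its (bounded) length, and reglue, strictly decreasing $f_2(D)$ while keeping the class nontrivial; iterating, a nontrivial element of $\pi_2(T)$ would be carried by a spherical diagram, hence a non-aspherical subcomplex $S$, on $O_\rho(1)$ vertices, reducing to the bounded case above. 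Making this localization precise — essentially a two-dimensional analogue of Theorem \ref{thm:local-to-global} — is where I expect the real work to be. An alternative completion would be to show that the minimal such $S$ is forced to be a $\Q$-acyclic closed $2$-pseudomanifold (a ``generalized $\mathbb{RP}^2$''), of which there are only $\exp(O(k))$ on $k$ labelled vertices, so that the union bound of the previous paragraph in fact converges for every $k$ and the large case needs no separate treatment.
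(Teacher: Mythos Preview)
Your reduction to excluding non-aspherical subcomplexes $S$ with every edge in at least two triangles and $f_2(S)\ge 2f_0(S)-2$ is correct and nicely argued, and your bounded-$k$ union bound is fine. The genuine gap is exactly where you flag it: the large-$k$ regime. Neither of your two proposed completions is close to a proof. For (a), the linear isoperimetric inequality from hyperbolicity controls \emph{disk} fillings of null-homotopic loops, not spherical diagrams; your ``cut along a short loop and refill'' sketch does not obviously decrease area (the two filled pieces together have area $f_2(D)$ plus twice the filling area), and there is no evident mechanism guaranteeing a short separating loop in $D$ whose filling is small enough to make one of the two resulting spheres both strictly smaller and still homotopically nontrivial. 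For (b), the image of a minimal reduced spherical diagram need not be a closed pseudomanifold (edges can lie in more than two triangles), and even if it were, the enumeration claim $\exp(O(k))$ for labelled $\Q$-acyclic pseudomanifolds on $k$ vertices is unsupported.

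The paper sidesteps the entire large-$k$ issue by invoking a deterministic result of Costa and Farber (Theorem~11 of \cite{CF15-1}, stated here as Theorem~\ref{CFDeterministic}): there is a \emph{finite} list $\mathcal{L}$ of $2$-complexes such that any $2$-complex avoiding all of them as subcomplexes is aspherical, and each $S\in\mathcal{L}$ other than $\partial\Delta^3$ contains a subcomplex $S'$ with $f_2(S')/f_0(S')\ge 47/46$. Since $H_2(T;\Q)=0$ rules out $\partial\Delta^3$, one only needs the union bound over this fixed finite list, and negative association gives probability $O(n^{-1/46})$ for each. In other words, the Costa--Farber theorem is precisely the ``localization'' you are trying to manufacture: it says any failure of asphericity is witnessed by a subcomplex of bounded size and excess density, so no large-$k$ argument is needed at all. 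Your density bound $f_2\ge 2f_0-2$ is actually sharper than Costa--Farber's $47/46$ ratio, but that extra sharpness buys nothing without a bound on $f_0$.
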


Our proof will use the following theorem of Costa and Farber \cite{CF15-1}. It is worth noting that this is a purely topological and combinatorial statement, and does not involve probability. 

\begin{theorem}[Costa--Farber, Theorem 11 of \cite{CF15-1}]\label{CFDeterministic}
There exists a finite list $\mathcal{L}$ of compact $2$-dimensional complexes with the following two properties:
\begin{enumerate}
    \item A finite simplicial 2-complex $Y$ is aspherical if it contains no subcomplex isomorphic to a complex from the list $\mathcal{L}$.
    \item For any $S \in \mathcal{L}$ other than the boundary of the tetrahedron, there exists a subcomplex $S' \subseteq S$ with $f_0(S')/f_2(S') \leq 46/47$ 
    \end{enumerate}
\end{theorem}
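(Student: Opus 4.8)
The plan is to derive $\mathcal{L}$ from a purely combinatorial characterization of asphericity for $2$-complexes, and then to extract a \emph{finite} detecting family by bounding the complexity of the minimal obstructions. First I would reduce asphericity to the vanishing of $\pi_2$. Since $Y$ has dimension at most $2$, its universal cover $\widetilde Y$ is a simply connected $2$-complex, so by the Hurewicz theorem $\pi_2(Y) \cong \pi_2(\widetilde Y) \cong H_2(\widetilde Y)$; if this vanishes then $\widetilde Y$ is simply connected and acyclic, hence contractible by Whitehead's theorem, so $Y$ is aspherical exactly when $\pi_2(Y)=0$. I would then invoke the combinatorial description of $\pi_2$ by spherical diagrams (van Kampen--Whitehead): $\pi_2(Y)\neq 0$ if and only if $Y$ admits a nontrivial \emph{reduced} spherical diagram, i.e.\ a combinatorial map $\psi\colon \Sigma \to Y$ from a cellulated $2$-sphere $\Sigma$ in which no edge of $\Sigma$ is flanked by two $2$-cells that $\psi$ folds together. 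Reducedness is the property that supplies all the local injectivity used below.

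Next I would organize the candidate list. Call a non-aspherical complex \emph{minimal} if every proper subcomplex is aspherical; deleting faces one at a time shows that every non-aspherical $Y$ contains a minimal one, and $\partial\Delta^3$ is the smallest such complex. Taking $\mathcal{L}$ to be the collection of \emph{all} minimal obstructions would make property (1) hold tautologically, but that family is infinite: it contains every triangulation of $S^2$ and of $\mathbb{RP}^2$, and each such triangulation is minimal (removing one triangle yields a disk, respectively a M\"obius band, both aspherical). So the substance of the theorem lies in replacing this family by a finite one, and property (2) records precisely how \emph{dense} the members of a usable family must be. This density is already transparent for the sphere-type obstruction: a triangulated $2$-sphere $\Sigma$ satisfies $3f_2(\Sigma)=2f_1(\Sigma)$ and $f_0-f_1+f_2=2$, whence $f_0(\Sigma)/f_2(\Sigma)=\tfrac12 + 2/f_2(\Sigma)$, which is $\le 46/47$ as soon as $f_2(\Sigma)\ge 6$ and equals $1$ only for $\partial\Delta^3$ (where $f_2=4$). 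This explains why $\partial\Delta^3$ is forced to be the unique exception. For a general minimal obstruction $S\neq\partial\Delta^3$ I would run a minimal-area reduced spherical diagram $\psi\colon \Sigma\to S$ and use reducedness to force each vertex of the image to be incident to several triangles (its link must carry a filled cycle); an incidence count then produces a subcomplex $S'$ with $f_2(S')>f_0(S')$, and tracking the extremal configuration pins the constant at $46/47$.

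The hard part will be the \emph{finiteness} of $\mathcal{L}$, that is, a uniform bound on the size of the minimal obstructions one is obliged to list. My approach would be surgery on a minimal-area reduced spherical diagram $\psi\colon\Sigma\to S$: minimality of both the diagram and of $S$ precludes proper spherical subdiagrams and simplifying folds, which rigidly constrains the singular set of $\psi$ in $\Sigma$ and should bound $f_2(\Sigma)$, hence $f_2(S)$ and $f_0(S)$, for all obstructions that are not (essentially) branched covers of a surface. The genuinely delicate case is when $S$ is a triangulation of $S^2$ or of $\mathbb{RP}^2$, where $\psi$ is close to a branched cover and no a priori size bound comes from the diagram alone; isolating these two families as the only unbounded ones, and arranging that they are detected instead through the density estimate of property (2) together with the explicit $\partial\Delta^3$ exception, is exactly what must be done to obtain an honest finite list. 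This is also where the unusual value $46/47$ is forced, and converting the structural constraints on reduced diagrams into such a finite family, while correctly absorbing the surface case, is the crux of the argument.
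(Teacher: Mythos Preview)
This theorem is not proved in the paper at all: it is quoted as Theorem~11 of Costa--Farber \cite{CF15-1}. The only ``proof'' content the paper supplies is the short remark following the statement, explaining that the original formulation gave \emph{asphericability} (aspherical after removing one face from each embedded tetrahedron boundary), and that here the tetrahedron boundary has simply been added to $\mathcal{L}$ so the conclusion becomes asphericity outright, which is harmless since a $2$-tree has $H_2(\,\cdot\,;\Q)=0$ and so contains no $\partial\Delta^3$. There is therefore nothing in this paper to compare your argument against.

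As for your sketch on its own terms: the reduction of asphericity to $\pi_2=0$ via Hurewicz and Whitehead, and the translation into reduced spherical diagrams, are fine and are indeed the setting Costa and Farber use. But your proposal does not actually prove either conclusion. For property~(1) you take $\mathcal{L}$ to be all minimal non-aspherical complexes, immediately observe this is infinite, and then say the substance lies in extracting a finite subfamily --- yet the extraction is never carried out beyond heuristics about surgery on minimal diagrams. For property~(2) you compute the density ratio for triangulated spheres (correctly), but for a general minimal obstruction you only assert that an incidence count ``should'' give $f_2(S')>f_0(S')$ and that ``tracking the extremal configuration pins the constant at $46/47$''; no such count or extremal analysis is provided, and the constant $46/47$ in Costa--Farber comes from a specific case analysis of a concrete finite list, not from an abstract density argument on arbitrary minimal obstructions. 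Your closing paragraph explicitly concedes that the finiteness and the surface case are ``the crux of the argument'' and leaves them open. So what you have is a reasonable orientation toward the relevant machinery, but not a proof.
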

We've modified the statement slightly from its original form. In \cite{CF15-1}, Costa and Farber show that for a certain regime of $p$, $Y(n, p)$ is \emph{asphericable}, that is it has the property that after removing a single face from every embedded tetrahedron boundary the resulting complex is aspherical. In the original formulation, the conclusion of part (1) is that the complex is asphericable. Here we simply added the tetrahedron boundary to the set $\mathcal{L}$ as we already know that a $2$-tree $T$ cannot contain tetrahedron boundaries, since $H_2(T; \Q) = 0$.


\begin{proof}[Proof of Theorem \ref{thm:aspherical}]
Take $\mathcal{L}$ to be the finite list of complexes in Theorem \ref{CFDeterministic}. We show that with high probability $T \sim \T(n)$ contains no subcomplex in $\mathcal{L}$. We already know that $T$ cannot contain the boundary of a tetrahedron; for any other $S \in \mathcal{L}$, we bound the probability that a determinantal-measure random $2$-tree contains $S$. For $S \in \mathcal{L}$, different from the tetrahedron boundary, take $S'$ to be a subcomplex of $S$ satisfying condition (2) of Theorem \ref{CFDeterministic} and let $v$ denote $f_0(S')$, then the probability that $T \sim \T(n)$ contains $S$ is at most the probability that it contains $S'$. By negative correlation the probability that $T$ contains $S'$ is at most
\[ \binom{n}{v} v! \left(\frac{3}{n}\right)^{47v/46}. \]
Indeed to embed $S'$ in $T$ we have to choose the $v$ vertices and then we have $|\Aut(S')| \leq v!$ ways to choose a copy of $S'$ on the selected vertex set. Now by negative correlation the probability that every face of the selected copy of $S'$ appears in $T$ is at most the product of the probability that each face of $S'$ appears, thus it is at most $(3/n)^{f(S')} \leq (3/n)^{47v/46}$. As $v$ is fixed and at least one the probability that $T$ contains $S'$ as an embedded subcomplex is $O(n^{-1/46})$. By a union bound over the finite list $\mathcal{L}$, the probability that $T$ contains any member of $\mathcal{L}$ is $O(n^{-1/46}) = o(1)$. Thus by part (1) of Theorem \ref{CFDeterministic}, with probability at least $1 - O(n^{-1/46})$, $T \sim \T(n)$ is aspherical.
\end{proof}

For a group $G$, let $\mbox{cd}_R(G)$ denote the cohomological dimension of $G$ with respect to coefficient ring $R$. We have the following immediate consequence of Theorem \ref{thm:aspherical}

\begin{theorem}
Let $T \sim \T(n)$. Then w.h.p.\ $\mbox{cd}_{\Z} \left( \pi_1(T) \right) = 2.$
\end{theorem}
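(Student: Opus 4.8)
The plan is to deduce $\mathrm{cd}_{\Z}(\pi_1(T)) = 2$ from the fact that, w.h.p., $T$ is a finite aspherical $2$-dimensional simplicial complex with $\pi_1(T)$ nontrivial. Asphericity (Theorem~\ref{thm:aspherical}) means the universal cover $\widetilde{T}$ is contractible, so $T$ is a $K(\pi_1(T),1)$; since $T$ is a $2$-complex, its cellular chain complex gives a free resolution of $\Z$ over $\Z[\pi_1(T)]$ of length $2$, whence $\mathrm{cd}_{\Z}(\pi_1(T)) \le 2$. For the lower bound I would argue that $\mathrm{cd}_{\Z}(\pi_1(T)) \ge 1$ because $\pi_1(T)$ is infinite, and then rule out $\mathrm{cd}_{\Z}(\pi_1(T)) = 1$ using Stallings--Swan: a group of cohomological dimension $1$ is free, and a finitely generated free group of rank $\ge 1$ has a presentation $2$-complex (a wedge of circles) which is aspherical with nontrivial $H_2$ unavailable — more directly, if $\pi_1(T)$ were free then, being the fundamental group of a finite aspherical $2$-complex, one computes $H_2(T;\Q) = H_2(\pi_1(T);\Q)$, which vanishes for a free group, consistent with $T$ being a $2$-tree, so this alone is not a contradiction; instead I would use the Euler characteristic.

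Here is the cleaner route I would actually write. First, w.h.p.\ $T$ is aspherical, so $\chi(T) = \chi(\pi_1(T))$ and, because $T$ is a finite $K(\pi_1(T),1)$ of dimension $2$, $\mathrm{cd}_{\Z}(\pi_1(T)) \le 2$. Second, w.h.p.\ $H_1(T) \ne 0$ by Theorem~\ref{nontrivialH1}; since $H_1(T) \cong \pi_1(T)^{\mathrm{ab}}$ is a nontrivial \emph{finite} group (finite because $H_1(T;\Q) = 0$), the group $\pi_1(T)$ is nontrivial and not free (a nontrivial free group has infinite abelianization $\Z^r$). A nontrivial group that is not free has $\mathrm{cd}_{\Z} \ge 2$ by the Stallings--Swan theorem, which states that $\mathrm{cd}_{\Z}(G) \le 1$ implies $G$ is free. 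Combining the two bounds gives $\mathrm{cd}_{\Z}(\pi_1(T)) = 2$ w.h.p.

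So the proof is short: intersect the high-probability event from Theorem~\ref{thm:aspherical} ($T$ aspherical) with the high-probability event from Theorem~\ref{nontrivialH1} ($H_1(T) \ne 0$), and on that event run the two-sided cohomological dimension estimate above. The one point that needs a sentence of care is the upper bound $\mathrm{cd}_{\Z}(\pi_1(T)) \le 2$: asphericity gives $\widetilde{T}$ contractible, so the $\Z[\pi_1(T)]$-cellular chain complex $0 \to C_2 \to C_1 \to C_0 \to \Z \to 0$ of $\widetilde{T}$ is a free resolution of $\Z$ of length $2$, and by definition $\mathrm{cd}_{\Z}(\pi_1(T)) \le 2$. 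The main (and really only) conceptual ingredient beyond what is already in the paper is the Stallings--Swan theorem that groups of cohomological dimension at most $1$ are free; everything else is bookkeeping with the two high-probability events and the universal-coefficient fact that $H_1(T)$ is finite. I would also remark that the same argument shows $\mathrm{cd}_{\Q}(\pi_1(T))$ could be strictly smaller, which is why the statement specifies $\Z$ coefficients — the nontriviality of \emph{integral} $H_1$ is what forces dimension $2$.
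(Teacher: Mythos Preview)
Your proposal is correct and follows essentially the same argument as the paper: asphericity gives a $2$-dimensional $K(\pi_1(T),1)$ and hence $\mathrm{cd}_{\Z}\le 2$, while Theorem~\ref{nontrivialH1} together with $H_1(T;\Q)=0$ shows $\pi_1(T)^{\mathrm{ab}}$ is nontrivial finite, so $\pi_1(T)$ is not free and Stallings--Swan gives $\mathrm{cd}_{\Z}\ge 2$. Your write-up is in fact a bit more careful than the paper's in spelling out why $\pi_1(T)$ is not free and why the upper bound holds via the cellular chain complex of $\widetilde{T}$.
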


\begin{proof}
In Section \ref{sec:nontrivial}, we saw that w.h.p.\ $H_1(T) \neq 0$ is nontrivial. By definition, $H_1(T, \Q) = 0$, so we have that w.h.p.\ $H_1(T)$ is a nontrivial group, and not a free group. By the Stallings--Swan Theorem \cite{Stallings68, Swan69}, we have $\mbox{cd}_{\Z} \left( \pi_1(T) \right) \ge 2$.

On the other hand, if $T$ is aspherical then $T$ is itself a $2$-dimensional $BG$ for $G = \pi_1(T)$, so $\mbox{cd}_{\Z} \left( \pi_1(T) \right) \le 2$.
\end{proof}

We end with a comment. If  $\mbox{cd}_{\Z} \left( \pi_1(T) \right) = 2$, then w.h.p.\ $\pi_1(T)$ must be infinite. Indeed, w.h.p.\ $\pi_1(T)$ is a nontrivial group, and it can not have any elements of finite order, since this would imply that the cohomological dimension is infinite. Even though our results show that (according to the determinantal measure) almost all hypertrees $T$ have infinite fundamental group $\pi_1(T)$, at the moment we are not aware of any explicit examples. 

\section{Questions} \label{sec:questions}

It seems to us that the random $2$-tree is a natural model for stochastic topology. We suggest a few more questions for further study. \\

\begin{itemize}
    \item {\bf Does $\pi_1(T)$ have Kazhdan's Property (T)?} A group is said to have Property (T) if the trivial representation is an isolated point in the unitary dual equipped with the Fell topology. This is an important property in representation theory, geometric group theory, ergodic theory, and the theory of expander graphs. See the monograph \cite{BdlHV08} for a comprehensive introduction. We conjecture that for $T \sim \T(n)$, w.h.p.\ $\pi_1(T)$ has Property (T). One motivation for the conjecture is that in \cite{HKP19}, it is shown that in the stochastic process version of the Linial--Meshulam  random $2$-complex, as soon as the complex $Y$ is pure $2$-dimensional, $\pi_1(Y)$ has Property (T). In general, it would be interesting to know about ``high-dimensional'' expander properties of random $2$-trees. See Lubotzky's 2018 ICM talk for an overview of high-dimensional expanders \cite{Lubotzky18}. \\
    \item {\bf Is $H_1(T)$ Cohen--Lenstra distributed?} Cohen--Lenstra heuristics, first arising in number-theoretic settings \cite{CL}, are a natural model for random finite abeliean groups. These heuristics now appear in several contexts, including cokernels of random matrices and random graph Laplacians. See, for example, \cite{CKLPW, EVW, Koplewitz, Lengler, NguyenWood, Wood2017}. In \cite{KLNP18}, Kahle, Lutz, Newman, and Parsons studied the \emph{uniform measure} on random $2$-trees, and examined the random finite abelian groups that appeared as the first homology group. There is strong experimental evidence for the conjecture that for any fixed prime $p$, the probability that the Sylow $p$-subgroup of homology $G$ is distributed according to a probability distribution assigning probability inversely proportional to $|\Aut(G)|$. Equivalently, for a given prime $p$ and $p$-group $H$, the probability that $G$ is isomorphic to $H$ is
    given by the formula
    $$\frac{ \prod_{k=1}^{\infty} \left( 1- p^{-k} \right)}{|\mbox{Aut}(H)|}$$
    
    We expect this same limiting probability holds, even if the $2$-trees are sampled by the determinantal measure instead. One can sample a $2$-tree with with the Metropolis--Hastings algorithm, and preliminary experiments support the conjecture. \\
    
    \item {\bf Is there a scaling limit?} The random $2$-tree is a $2$-dimensional analogue of the uniform spanning tree (UST) on the complete graph on $n$ vertices. The UST is known to have a \emph{scaling limit}, where a suitably rescaled UST converges to a limiting distribution as $n \to \infty$. This limit was described by Aldous in \cite{Aldous91a, Aldous91b, Aldous93}, who called it the ``continuum random tree'', and it has been studied extensively since then. An illustration of a continuum random tree computed by by Igor Kortchemski appears in Figure \ref{fig:CRT}. Is there a scaling limit for the random $2$-tree?
    
    \begin{figure}
        \centering
        \includegraphics[width=4.5in]{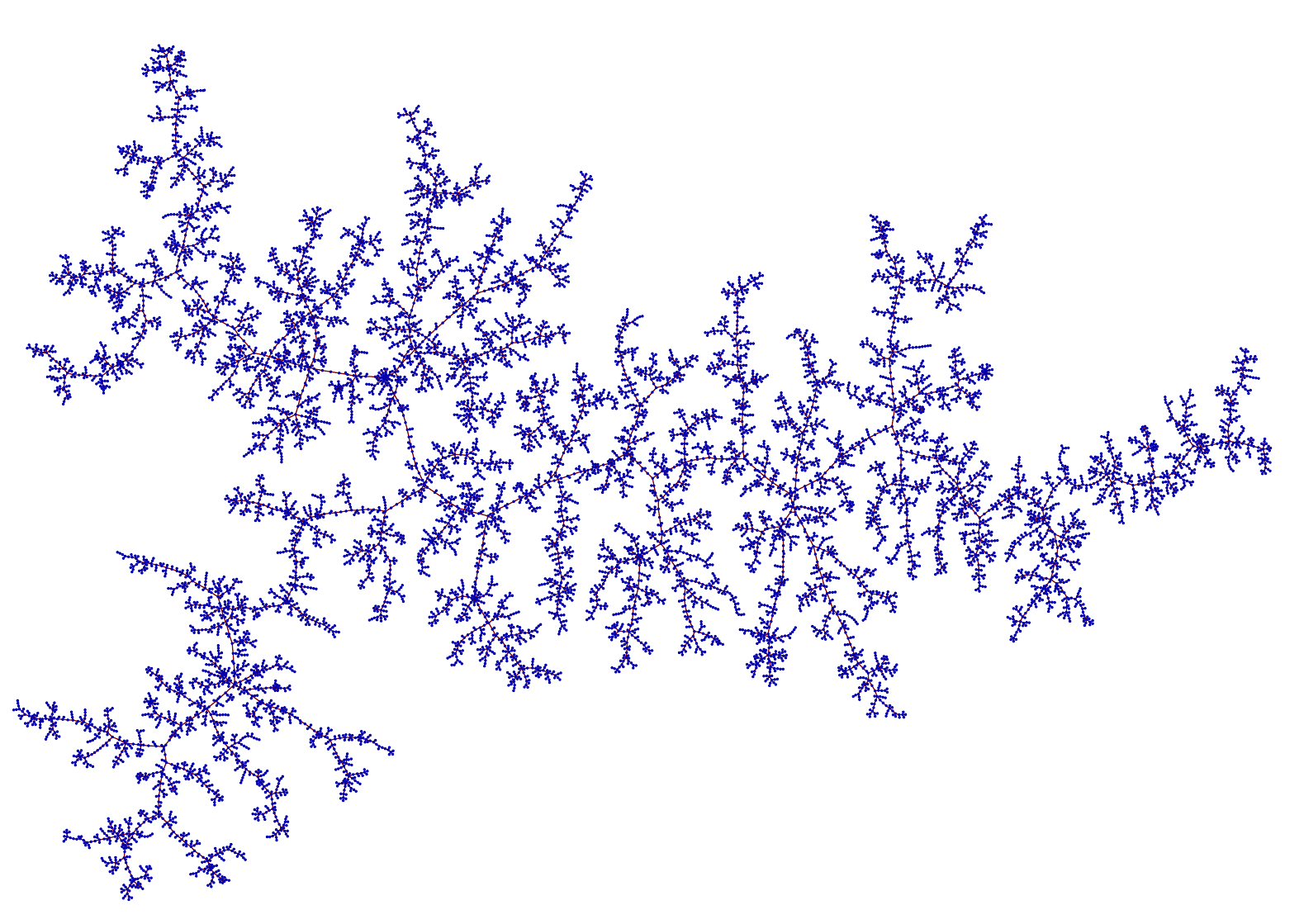}
        \caption{An image of a continuum random tree generated by Igor Kortchemski.}
        \label{fig:CRT}
    \end{figure}
    
\end{itemize}

\section*{Acknowledgements.} M.K.\ is grateful to Nati Linial for suggesting the study of random hypertrees and for encouragement. We thank TU Berlin for hosting us during the 2019--20 academic year. We also gratefully acknowledge Igor Kortchemski's permission for use of the image in Figure \ref{fig:CRT}.

\bibliographystyle{plain}
\bibliography{treerefs}

\begin{thebibliography}{10}

\bibitem{AZ18}
Martin Aigner and G\"{u}nter~M. Ziegler.
\newblock {\em Proofs from {T}he {B}ook}.
\newblock Springer, Berlin, sixth edition, 2018.
\newblock See corrected reprint of the 1998 original [ MR1723092], Including
  illustrations by Karl H. Hofmann.

\bibitem{Aldous91a}
David Aldous.
\newblock The continuum random tree. {I}.
\newblock {\em Ann. Probab.}, 19(1):1--28, 1991.

\bibitem{Aldous91b}
David Aldous.
\newblock The continuum random tree. {II}. {A}n overview.
\newblock In {\em Stochastic analysis ({D}urham, 1990)}, volume 167 of {\em
  London Math. Soc. Lecture Note Ser.}, pages 23--70. Cambridge Univ. Press,
  Cambridge, 1991.

\bibitem{Aldous93}
David Aldous.
\newblock The continuum random tree. {III}.
\newblock {\em Ann. Probab.}, 21(1):248--289, 1993.

\bibitem{AD16}
Arnon Avron and Nachum Dershowitz.
\newblock Cayley's formula: a page from the book.
\newblock {\em Amer. Math. Monthly}, 123(7):699--700, 2016.

\bibitem{BHK11}
Eric Babson, Christopher Hoffman, and Matthew Kahle.
\newblock The fundamental group of random 2-complexes.
\newblock {\em J. Amer. Math. Soc.}, 24(1):1--28, 2011.

\bibitem{BdlHV08}
Bachir Bekka, Pierre de~la Harpe, and Alain Valette.
\newblock {\em Kazhdan's property ({T})}, volume~11 of {\em New Mathematical
  Monographs}.
\newblock Cambridge University Press, Cambridge, 2008.

\bibitem{Borchardt1860}
C.~W. Borchardt.
\newblock Ueber eine der {I}nterpolation entsprechende {D}arstellung der
  {E}liminations-{R}esultante.
\newblock {\em J. Reine Angew. Math.}, 57:111--121, 1860.

\bibitem{Brown}
Kenneth~S. Brown.
\newblock {\em Cohomology of groups}, volume~87 of {\em Graduate Texts in
  Mathematics}.
\newblock Springer-Verlag, New York-Berlin, 1982.

\bibitem{Cayley1889}
Arthur Cayley.
\newblock A theorem on trees.
\newblock {\em Quart. J. Pure Appl. Math.}, 23:376--376, 1889.

\bibitem{CKLPW}
J.~Clancy, N.~Kaplan, T.~Leake, S.~Payne, and M.~M. Wood.
\newblock On a {C}ohen-{L}enstra heuristic for {J}acobians of random graphs.
\newblock {\em J. Algebraic Combin.}, 42(3):701--723, 2015.

\bibitem{CL}
H.~Cohen and H.W. Lenstra.
\newblock Heuristics on class groups of number fields.
\newblock {\em Number theory, Noordwijkerhout 1983}, 1068:33--62, 1984.

\bibitem{CF15-1}
A.~E. Costa and M.~Farber.
\newblock The asphericity of random 2-dimensional complexes.
\newblock {\em Random Structures Algorithms}, 46(2):261--273, 2015.

\bibitem{CF15-2}
A.~E. Costa and M.~Farber.
\newblock Geometry and topology of random 2-complexes.
\newblock {\em Israel J. Math.}, 209(2):883--927, 2015.

\bibitem{DKM09}
Art~M. Duval, Caroline~J. Klivans, and Jeremy~L. Martin.
\newblock Simplicial matrix-tree theorems.
\newblock {\em Trans. Amer. Math. Soc.}, 361(11):6073--6114, 2009.

\bibitem{EVW}
J.~S. Ellenberg, A.~Venkatesh, and C.~Westerland.
\newblock Homological stability for {H}urwitz spaces and the {C}ohen-{L}enstra
  conjecture over function fields.
\newblock {\em Ann. of Math. (2)}, 183(3):729--786, 2016.

\bibitem{Handbook2018}
Jacob~E. Goodman, Joseph O'Rourke, and Csaba~D. T\'{o}th, editors.
\newblock {\em Handbook of discrete and computational geometry}.
\newblock Discrete Mathematics and its Applications (Boca Raton). CRC Press,
  Boca Raton, FL, 2018.
\newblock Third edition of [ MR1730156].

\bibitem{Gromov87}
M.~Gromov.
\newblock Hyperbolic groups.
\newblock In {\em Essays in group theory}, volume~8 of {\em Math. Sci. Res.
  Inst. Publ.}, pages 75--263. Springer, New York, 1987.

\bibitem{HKP17}
Christopher Hoffman, Matthew Kahle, and Elliot Paquette.
\newblock The threshold for integer homology in random {$d$}-complexes.
\newblock {\em Discrete Comput. Geom.}, 57(4):810--823, 2017.

\bibitem{HKP19}
Christopher Hoffman, Matthew Kahle, and Elliot Paquette.
\newblock Spectral gaps of random graphs and applications.
\newblock {\em International Mathematics Research Notices (IMRN)}, 05 2019.
\newblock https://doi.org/10.1093/imrn/rnz077.

\bibitem{KLNP18}
Matthew Kahle, Frank~H. Lutz, Andrew Newman, and Kyle Parsons.
\newblock {C}ohen--{L}enstra heuristics for torsion in homology of random
  complexes.
\newblock {\em Exp. Math.}, 2018.
\newblock https://doi.org/10.1080/10586458.2018.1473821.

\bibitem{Kalai1983}
G.~Kalai.
\newblock Enumeration of {${\bf Q}$}-acyclic simplicial complexes.
\newblock {\em Israel J. Math.}, 45(4):337--351, 1983.

\bibitem{Koplewitz}
S.~Koplewitz.
\newblock Sandpile groups and the coeulerian property for random directed
  graphs.
\newblock {\em Advances in Applied Mathematics}, 90:145 -- 159, 2017.

\bibitem{Lengler}
J.~Lengler.
\newblock The global {C}ohen--{L}enstra heuristic.
\newblock {\em Journal of Algebra}, 357(Supplement C):347 -- 369, 2012.

\bibitem{LM06}
Nathan Linial and Roy Meshulam.
\newblock Homological connectivity of random 2-complexes.
\newblock {\em Combinatorica}, 26(4):475--487, 2006.

\bibitem{LP19}
Nati Linial and Yuval Peled.
\newblock Enumeration and randomized constructions of hypertrees.
\newblock {\em Random Structures Algorithms}, 55(3):677--695, 2019.

\bibitem{Lubotzky18}
Alexander Lubotzky.
\newblock High dimensional expanders.
\newblock In {\em Proceedings of the {I}nternational {C}ongress of
  {M}athematicians---{R}io de {J}aneiro 2018. {V}ol. {I}. {P}lenary lectures},
  pages 705--730. World Sci. Publ., Hackensack, NJ, 2018.

\bibitem{Lyons03}
Russell Lyons.
\newblock Determinantal probability measures.
\newblock {\em Publ. Math. Inst. Hautes \'{E}tudes Sci.}, (98):167--212, 2003.

\bibitem{Lyons09}
Russell Lyons.
\newblock Random complexes and {$l^2$}-{B}etti numbers.
\newblock {\em J. Topol. Anal.}, 1(2):153--175, 2009.

\bibitem{NguyenWood}
Hoi~H. Nguyen and Melanie~Matchett Wood.
\newblock Random integral matrices: universality of surjectivity and the
  cokernel.
\newblock arXiv:1806.00596, 2018.

\bibitem{Papasoglu96}
P.~Papasoglu.
\newblock An algorithm detecting hyperbolicity.
\newblock In {\em Geometric and computational perspectives on infinite groups
  ({M}inneapolis, {MN} and {N}ew {B}runswick, {NJ}, 1994)}, volume~25 of {\em
  DIMACS Ser. Discrete Math. Theoret. Comput. Sci.}, pages 193--200. Amer.
  Math. Soc., Providence, RI, 1996.

\bibitem{Soule99}
C.~Soul\'{e}.
\newblock Perfect forms and the {V}andiver conjecture.
\newblock {\em J. Reine Angew. Math.}, 517:209--221, 1999.

\bibitem{Stallings68}
John~R. Stallings.
\newblock On torsion-free groups with infinitely many ends.
\newblock {\em Ann. of Math. (2)}, 88:312--334, 1968.

\bibitem{Swan69}
Richard~G. Swan.
\newblock Groups of cohomological dimension one.
\newblock {\em J. Algebra}, 12:585--610, 1969.

\bibitem{Wood2017}
Melanie~Matchett Wood.
\newblock The distribution of sandpile groups of random graphs.
\newblock {\em J. Amer. Math. Soc.}, 30(4):915--958, 2017.

\end{thebibliography}
\nocite{*}

\end{document}